\def\a{{\mathfrak{a}}} 
\def\F{{\mathbb{F}}} 
\def\I{{\mathcal{I}}} 
\def\J{{\mathcal{J}}} 
\def\m{{\mathfrak{m}}}
\def\n{{\mathfrak{n}}} 
\def\Z{{\mathbb{Z}}}
\def\sO{{\mathcal{O}}}
\def\Q{{\mathbb{Q}}} 
\def\R{{\mathbb{R}}} 
\def\C{{\mathbb{C}}}
\def\Ker{{\mathrm{Ker\;}}}
\def\Ann{{\mathrm{Ann}}}
\def\Spec{{\mathrm{Spec\; }}}
\def\adj{{\mathrm{adj}}}
\def\Div{{\mathrm{div}}}
\def\ord{{\mathrm{ord}}} 
\theoremstyle{plain}
\newtheorem{thm}{Theorem}[section] 
\newtheorem{cor}[thm]{Corollary}
\newtheorem{prop}[thm]{Proposition}
\newtheorem{conj}[thm]{Conjecture}
\newtheorem{mainthm}{Theorem}
\newtheorem{deflem}[thm]{Definition-Lemma}
\newtheorem{lem}[thm]{Lemma}
\theoremstyle{definition} 
\newtheorem{defn}[thm]{Definition}
\newtheorem{eg}[thm]{Example} 
\theoremstyle{remark}
\newtheorem{rem}[thm]{Remark}
\newtheorem*{cl}{Claim}
\newtheorem{cln}{Claim}
\newtheorem*{acknowledgement}{Acknowledgments}
\title{Adjoint ideals along closed subvarieties\\ of higher codimension}
\author{Shunsuke Takagi}
\address{Department of Mathematics, Kyushu University, 
6-10-1 Hakozaki, Higashi-ku, Fukuoka, 812-8581 Japan}
\email{stakagi@math.kyushu-u.ac.jp}
\subjclass[2000]{13A35, 14B05}
\begin{document}
\tolerance = 9999

\begin{abstract}
In this paper, we introduce a notion of adjoint ideal sheaves along closed subvarieties of higher codimension and study its local properties using characteristic $p$ methods.  
When $X$ is a normal Gorenstein closed subvariety of a smooth complex variety $A$, we formulate a restriction property of the adjoint ideal sheaf $\adj_X(A)$ of $A$ along $X$ involving the l.c.i.~ideal sheaf  $\mathcal{D}_X$ of $X$. 
The proof relies on a modification of generalized test ideals of Hara and Yoshida \cite{HY}. 
\end{abstract}

\maketitle
\markboth{Shunsuke Takagi}{Adjoint ideals along closed subvarieties of higher codimension}
\section*{Introduction} 
The adjoint ideal sheaf along a divisor $D$ on a complex variety $V$ is a modification of the multiplier ideal sheaf associated to $D$, and it encodes much information on the singularities of $D$. 
It recently turned out that it is a powerful tool in birational geometry and has several applications, such as the study of singularities of ample divisors of low degree on abelian varieties by Ein-Lazarsfeld \cite{EL} and Debarre-Hacon \cite{DH}, inversion of adjunction on log canonicity proved by Kawakita \cite {Ka2}, and the boundedness of pluricanonical maps of varieties of general type proved by Hacon-M$\mathrm{{}^c}$Kernan \cite{HM} and Takayama \cite{Takayama}. 
In this paper, we introduce a notion of adjoint ideal sheaves along closed subvarieties of higher codimension and study its local properties using characteristic $p$ methods. We hope that our adjoint ideal sheaves lead to further applications. 

Let $A$ be a smooth complex variety and $Y=\sum_{i=1}^m t_i Y_i$ be a formal combination, where the $t_i$ are positive real numbers and the $Y_i$ are proper closed subschemes of $A$. 
Let $X$ be a reduced closed subscheme of pure codimension $c$ of $A$ such that no components of $X$ are contained in the support of any $Y_i$. 
Suppose that $\pi: \widetilde{A} \to A$ is a log resolution of $(A, X+Y)$ and $E:=\sum_{j=1}^sE_j$ is smooth, where $E_1, \dots, E_s$ are all the irreducible divisors on $\widetilde{A}$ ``dominating" a component of $X$.  
If $K_{\widetilde{A}/A}$ is the relative canonical divisor of $\pi$, then we define the \textit{adjoint ideal sheaf} $\adj_X(A,Y)$ associated to the pair $(A,Y)$ along $X$ by
$$\adj_X(A,Y):=\pi_*\sO_{\widetilde{A}}(K_{\widetilde{A}/A}-c \ \pi^{-1}(X)-\lfloor \pi^{-1}(Y) \rfloor +E),$$
where $\pi^{-1}(X)$ and $\pi^{-1}(Y):=\sum_{i=1}^m t_i \pi^{-1}(Y_i)$ are the scheme theoretic inverse images of $X$ and $Y$, respectively (see Definition \ref{adjoint def} for the precise definition of the adjoint ideal sheaf $\adj_X(A,Y)$). 
We say that $(A,Y)$ is \textit{purely log terminal} (plt, for short) along $X$ if $\adj_X(A,Y)=\sO_A$. 
When $X$ is a divisor, our definitions coincide with the definitions of usual plt pairs and adjoint ideal sheaves. 
In order to study local properties of our adjoint ideal sheaves using characteristic $p$ methods, we consider a modification of generalized test ideals of Hara and Yoshida. 

Let $(R,\m)$ be a Noetherian local domain of characteristic $p>0$ and $\underline{\a}^{\underline{t}}=\prod_{i=1}^m \a_i^{t_i}$ be a formal combination, where the $\a_i \subseteq R$ are nonzero ideals and the $t_i$ are positive real numbers. 
Hara-Yoshida \cite{HY} introduced notions of tight closure for the pair $(R,\underline{\a}^{\underline{t}})$,  called $\underline{\a}^{\underline{t}}$-tight closure, and the corresponding test ideal $\widetilde{\tau}(\underline{\a}^{\underline{t}})$. 
They then proved that the multiplier ideal sheaf coincides, after reduction to characteristic $p \gg 0$, with their generalized test ideal. 
In this paper, we define a notion of tight closure for a triple $(R,I, \underline{\a}^{\underline{t}})$, called $(I, \underline{\a}^{\underline{t}})$-tight closure, where $I \subseteq R$ is an unmixed ideal of height $c$ such that the $\a_i$ are not contained in any minimal prime ideal of $I$: the $(I,\underline{\a}^{\underline{t}})$-tight closure $J^{*(I,\underline{\a}^{\underline{t}})}$ of an ideal $J \subseteq R$ is the ideal consisting of all elements $x \in R$ for which there exists $\gamma \in R$ not in any minimal prime of $I$ such that
$$\gamma I^{c(q-1)}\a_1^{\lceil t_1q \rceil} \cdots \a_m^{\lceil t_mq \rceil} x^q \subseteq J^{[q]}$$
for all large $q=p^e$, where $J^{[q]}$ is the ideal generated by the $q^{\rm th}$ powers of all elements of $J$. 
If $N \subseteq M$ are $R$-modules, then the $(I,\underline{\a}^{\underline{t}})$-tight closure $N_M^{*(I,\underline{\a}^{\underline{t}})}$ of $N$ in $M$ is defined similarly (see Definition \ref{test ideal} for the detail). 
We then define the generalized test ideal $\widetilde{\tau}_I(\underline{\a}^{\underline{t}})$ along $I$
to be the annihilator ideal of the $(I,\underline{\a}^{\underline{t}})$-tight closure $0^{*(I,\underline{\a}^{\underline{t}})}_{E_R(R/\m)}$ of the zero submodule in the injective hull $E_R(R/\m)$ of the residue field of $R$. 
When $I=R$,  $(I, \underline{\a}^{\underline{t}})$-tight closure coincides with  $\underline{\a}^{\underline{t}}$-tight closure and the generalized test ideal $\widetilde{\tau}_I(\underline{\a}^{\underline{t}})$ along $I$ is nothing but $\widetilde{\tau}(\underline{\a}^{\underline{t}})$. 
We conjecture that the ideal $\widetilde{\tau}_I(\underline{\a}^{\underline{t}})$ corresponds to the adjoint ideal sheaf $\adj_X(A,Y)$, and we obtain some partial results (Theorems \ref{I < adj} and \ref{correspond}). 
We use them to prove a restriction formula of our adjoint ideal sheaves. 

Kawakita \cite{Ka} and Ein-Musta\c{t}\v{a} \cite{EM} introduced an ideal sheaf, called the l.c.i.~defect ideal sheaf, which measures how far a variety is from being locally a complete intersection. 
They then proved a comparison of minimal log discrepancies of a variety $X$ and its ambient space $A$ with a boundary corresponding to the l.c.i.~defect ideal sheaf $\mathcal{D}_X$ of $X$.  
Their result inspires us to formulate a restriction property of the adjoint ideal sheaf $\adj_X(A,Y)$ involving the l.c.i.~defect ideal sheaf $\mathcal{D}_X$ of $X$. 

\renewcommand{\themainthm}{3.1}
\begin{mainthm}
Let $A$ be a smooth complex variety and $Y=\sum_{i=1}^m t_i Y_i$ be a formal combination, where the $t_i$ are positive real numbers and the $Y_i$ are proper closed subschemes of $A$.
If $X$ is a normal Gorenstein closed subvariety of codimension $c$ of $A$ which is not contained in the support of any $Y_i$, then 
$$\J(X,V(\mathcal{D}_X)+Y|_X)=\adj_X(A,Y) \sO_X, $$
where 
$\J(X,V(\mathcal{D}_X)+Y|_X)$ is the multiplier ideal sheaf associated to the pair $(X,V(\mathcal{D}_X)+Y|_X)$ $($see Definition \ref{codimension one} for the definition of multiplier ideal sheaves$)$.
\end{mainthm}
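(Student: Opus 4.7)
The plan is to pass to a common log resolution, reduce the theorem via a restriction short exact sequence and local vanishing to a computation on the divisors dominating the components of $X$, and then to carry out that computation using the normal Gorenstein hypothesis together with the defining property of the l.c.i.~defect ideal $\mathcal{D}_X$. First, I would choose a log resolution $\pi:\widetilde A\to A$ of $(A,X+Y)$ such that $E=\sum_j E_j$ is smooth (so the $E_j$ are pairwise disjoint) and such that the restricted maps $\pi_j:=\pi|_{E_j}\colon E_j\to X_j$ principalize $\mathcal{D}_X\cdot\sO_{X_j}$ and $Y|_{X_j}$; set
\[
D \;:=\; K_{\widetilde A/A}-c\,\pi^{-1}(X)-\lfloor\pi^{-1}(Y)\rfloor+E,
\]
so that $\adj_X(A,Y)=\pi_*\sO_{\widetilde A}(D)$ by definition.

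Pushing forward the short exact sequence
\[
0\to \sO_{\widetilde A}(D-E)\to \sO_{\widetilde A}(D)\to \sO_E(D|_E)\to 0
\]
and invoking local vanishing for multiplier ideals, $R^1\pi_*\sO_{\widetilde A}(D-E)=0$ (applicable since $\pi_*\sO_{\widetilde A}(D-E)=\J(A,cX+Y)$), I would obtain
\[
\pi_*\sO_E(D|_E)\;\cong\;\adj_X(A,Y)/\J(A,cX+Y).
\]
The containment $\J(A,cX+Y)\subseteq \I_X$ makes this quotient an $\sO_X$-module equipped with a natural surjection onto the extended ideal $\adj_X(A,Y)\cdot\sO_X$. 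I would verify that this surjection is an isomorphism by a divisorial check on each $E_j$ (comparing the conditions ``$f\in\I_X\cap\adj_X(A,Y)$'' and ``$f\in\J(A,cX+Y)$'' via the multiplicities of $E_j$ in $K_{\widetilde A/A}$, $\pi^{-1}(X)$ and $E$), reducing the theorem to the identification $\pi_*\sO_E(D|_E)=\J(X,V(\mathcal{D}_X)+Y|_X)$, which splits, the $E_j$ being disjoint, into the component-wise statements $(\pi_j)_*\sO_{E_j}(D|_{E_j})=\J(X_j,V(\mathcal{D}_X)|_{X_j}+Y|_{X_j})$.

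The main obstacle is this component-wise identification. Since $\pi_j$ is typically not birational (its fibers have dimension $c-1$ when $E_j$ arises from a single blowup along $X_j$), the identification cannot be made by direct adjunction on $E_j$ alone, and the defect ideal $\mathcal{D}_X$ enters precisely as the missing correction term. The normal Gorenstein hypothesis is what makes this work: Gorensteinness ensures $\omega_{X_j}$ is a line bundle, normality ensures that the fundamental class of $X_j$ inside $\omega_A(cX)|_{X_j}$ generates $\omega_{X_j}$ generically, and $\mathcal{D}_X$ admits a conductor-type description (following Kawakita \cite{Ka} and Ein--Musta\c{t}\v{a} \cite{EM}) comparing these two line bundles on $X_j$. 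Pulling this conductor description back through $\pi_j$ should produce a divisorial identity that, after absorbing relatively trivial contributions via the pushforward along the positive-dimensional fibers of $\pi_j$, converts $(\pi_j)_*\sO_{E_j}(D|_{E_j})$ into $\J(X_j,V(\mathcal{D}_X)|_{X_j}+Y|_{X_j})$. This is the higher-codimension analogue of the minimal-log-discrepancy computations in the cited papers, and once in hand it closes the proof.
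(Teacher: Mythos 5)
Your approach is genuinely different from the paper's, and the first half of it is sound. The short exact sequence $0\to\sO_{\widetilde A}(D-E)\to\sO_{\widetilde A}(D)\to\sO_E(D|_E)\to 0$, Nadel local vanishing for $\J(A,cX+Y)=\pi_*\sO_{\widetilde A}(D-E)$, and the divisorial check that $\J(A,cX+Y)=\adj_X(A,Y)\cap\I_X$ (using $\ord_{E_j}(D)=0$ and $\ord_{E_j}(\pi^*f)\ge 1$ for $f\in\I_X$) together give a clean identification $\pi_*\sO_E(D|_E)\cong\adj_X(A,Y)\sO_X$. This is a nice observation. But the paper proves the theorem by an entirely different route: one inclusion is deduced directly from the Kawakita/Ein--Musta\c{t}\v{a} comparison of minimal log discrepancies, and the converse inclusion is obtained by reducing to the codimension-one adjoint ideal $\adj_{D^Z}(X,Y|_X)$ along a Cartier divisor cut out by a general complete intersection $Z\supset X$ (Claim 1), then passing to characteristic $p\gg 0$ and establishing an inclusion of generalized test ideals by means of the linkage-theoretic identity $f^{q-1}(I^{[q]}:I)\subseteq I^{c(q-1)}+I^{[q]}$ (Claim 2).

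The gap in your proposal is precisely at the step you flag as the ``main obstacle'': the identification $\pi_*\sO_E(D|_E)=\J(X,V(\mathcal{D}_X)+Y|_X)$. Here $\pi_j:E_j\to X_j$ is a fibration with $(c-1)$-dimensional generic fiber, not a resolution of $X_j$, so a multiplier ideal on $X_j$ --- which by definition is a pushforward along a proper \emph{birational} morphism from a smooth source --- cannot be read off from $(\pi_j)_*$ without a substantial intermediate argument, and you do not supply one. The sketch you give is not yet a proof: $\omega_A(cX)|_{X_j}$ is not a well-defined object when $c>1$ (there is no divisor $cX$), the ``conductor-type description'' of $\mathcal{D}_X$ is not stated or connected to the geometry of $E_j$, and ``absorbing relatively trivial contributions via the pushforward along the positive-dimensional fibers'' is exactly the missing computation. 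Indeed the crucial input you defer to --- ``the higher-codimension analogue of the minimal-log-discrepancy computations in the cited papers'' --- is what the paper already uses for the \emph{easy} inclusion $\adj_X(A,Y)\sO_X\subseteq\J(X,V(\mathcal{D}_X)+Y|_X)$; for the hard converse the paper needs the additional characteristic-$p$/linkage machinery, and your proposal has no substitute for it. Without actually carrying out the identification of $\pi_*\sO_E(D|_E)$ with the multiplier ideal, the argument does not close.
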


By making use of the partial correspondence between the adjoint ideal sheaf $\adj_X(A,Y)$ and the generalized test ideal $\widetilde{\tau}_I(\underline{\a}^{\underline{t}})$, Theorem \ref{restriction} can be reduced to a purely algebraic problem on some ideals of a ring of characteristic $p>0$. 
We then solve the problem using the linkage theory of Peskine and Szpiro \cite{PS}.  

As a corollary of Theorem \ref{restriction}, we obtain a characterization of being plt along a Gorenstein closed subvariety in terms of Frobenius splitting (Corollary \ref{purely F-regular vs plt}). 

\section{Multiplier ideals and Adjoint ideals}
In this section, we first recall the definitions of multiplier ideal sheaves and adjoint ideal sheaves along divisors (our main references are \cite{KM} and \cite{La}), and then we introduce a notion of adjoint ideal sheaves along closed subvarieties of higher codimension. 

Let $X$ be a $d$-dimensional $\Q$-Gorenstein normal variety over a field $k$ of characteristic zero and $Y=\sum_{i=1}^m t_iY_i$ be a formal combination, where the $t_i$ are real numbers and the $Y_i$ are proper closed subschemes of $X$. 
Since $X$ is normal, we have a Weil divisor $K_X$ on $X$, uniquely determined up to linear equivalence, such that $\sO_X(K_X) \cong i_*\Omega^d_{X_{\rm reg}}$ where $i:X_{\rm reg} \hookrightarrow X$ is the inclusion of the nonsingular locus. 
Moreover, since $X$ is $\Q$-Gorenstein, there exists a positive integer $r$ such that $r K_X$ is a Cartier divisor. 

Let $E$ be a \textit{divisor over} $X$, that is, $E$ is an irreducible divisor on some normal variety $X'$ with a birational morphism $f:X' \to X$.
We identify two divisors over $X$ if they correspond to the same valuation of the function field $k(X)$. 
The \textit{center} of $E$ is the closure of $f(E)$ in $X$, denoted by $c_X(E)$. 
If $Z$ is a closed subscheme of $X$, then we define $\ord_E(Z)$ as follows: we may assume that the scheme theoretic inverse image $f^{-1}(Z)$ is a divisor. 
Then $\ord_E(Z)$ is the coefficient of $E$ in $f^{-1}(Z)$. 
We put $\ord_E(Y):=\sum_{i=1}^m t_i \ord_E(Y_i)$ and define $\ord_E(K_{-/X})$ as the coefficient of $E$ in the relative canonical divisor $K_{X'/X}$ of $f$. 
Recall that $K_{X'/X}$ is the unique $\Q$-divisor supported on the exceptional locus of $f$ such that $r K_{X'/X}$ is linearly equivalent to $r K_{X'}-f^*(rK_X)$. 
Then the \textit{log discrepancy} $a(E;X,Y)$ of $(X,Y)$ with respect to $E$ is 
$$a(E;X,Y):=\mathrm{ord}_E(K_{-/X})-\ord_E(Y)+1.$$
If $W$ is a closed subset of $X$, then the \textit{minimal log discrepancy} $\mathrm{mld}(W;X,Y)$ of $(X,Y)$ along $W$ is defined by
$$\mathrm{mld}(W;X,Y):=\inf\{a(E;X,Y) \mid \textup{$E$ is a divisor over $X$, } c_X(E) \subseteq W\}.$$

\begin{defn}\label{sing pairs}
Let the notation be the same as above. 
\renewcommand{\labelenumi}{(\roman{enumi})}
\begin{enumerate}
\item We say that the pair $(X,Y)$ is \textit{Kawamata log terminal} (klt, for short) if $\mathrm{mld}(X;X,Y)>0$. 
Since a resolution of singularities is obtained by blowing up subvarieties in the singular locus, this condition is equivalent to saying that $\mathrm{mld}(X_{\rm sing} \cup \bigcup_{i=1}^m Y_i;X,Y)>0$, where $X_{\rm sing}$ is the singular locus of $X$. 
\item Let $D$ be a reduced Cartier divisor on $X$ such that no components of $D$ are contained in the support of any $Y_i$. Then we say that $(X,Y)$ is \textit{purely log terminal} (plt, for short) along $D$ if 
$a(E;X,D+Y)>0$ for all divisors $E$ over $X$ dominating no components of $D$.  
\end{enumerate}
\end{defn}

Suppose that $(X,Y)$ is a pair as above.
A \textit{log resolution} of the pair $(X,Y)$ is a proper birational morphism $\pi: \widetilde{X} \to X$ with $\widetilde{X}$ nonsingular such that all the scheme theoretic inverse images $\pi^{-1}(Y_i)$ are divisors and in addition $\bigcup_{i=1}^m \mathrm{Supp}\; \pi^{-1}(Y_i) \cup \mathrm{Exc}(\pi)$ is a simple normal crossing divisor. 
The existence of log resolutions is guaranteed by Hironaka's desingularization theorem \cite{Hi}. 

\begin{defn}[\textup{\cite[Definition 9.3.60]{La}}]\label{codimension one}
Let the notation be the same as above. 
\renewcommand{\labelenumi}{(\roman{enumi})}
\begin{enumerate}
\item 
Fix a log resolution $\pi: \widetilde{X} \to X$ of $(X,Y)$. 
The \textit{multiplier ideal sheaf} $\J(X,Y)$ associated to the pair $(X,Y)$ is
$$\J(X,Y)=\pi_*\sO_{\widetilde{X}}(\lceil K_{\widetilde{X}/X}-\sum_{i=1}^m t_i \pi^{-1}(Y_i) \rceil) \subseteq \sO_X.$$
\item 
Let $D$ be a reduced Cartier divisor on $X$ such that no components of $D$ are contained in the support of any $Y_i$. Fix a log resolution $\pi: \widetilde{X} \to X$ of $(X,D+Y)$ so that the strict transform $\pi^{-1}_*D$ of $D$ is nonsingular (but possibly disconnected). 
Then the \textit{adjoint ideal sheaf} $\adj_{D}(X,Y)$ associated to the pair $(X,Y)$ along $D$ is $$\adj_{D}(X,Y)=\pi_*\sO_{\widetilde{X}}(\lceil K_{\widetilde{X}/X}-\sum_{i=1}^m t_i\pi^{-1}(Y_i)-\pi^*D+\pi^{-1}_*D \rceil) \subseteq \sO_X.$$
We denote this ideal sheaf simply by $\adj_{D}(X)$ when $Y=0$. 
\end{enumerate}
\end{defn}

\begin{rem}\label{adjoint rem}
\begin{enumerate}
\item (cf. \cite[Theorem 9.2.18]{La}) 
$\J(X,Y)$ and $\adj(X,Y)$ are independent of the choice of the log resolution $\pi$ used to define them (see also Lemma \ref{adjoint basic} (1)). 
\item 
The pair $(X,Y)$ is klt (resp. plt along $D$) if and only if $\J(X,Y)=\sO_X$ (resp. $\adj_D(X,Y)=\sO_X$). 
\item 
(\cite[Example 9.3.49]{La}) Suppose that $X$ is an affine variety and $I$ is a nonzero ideal of $\sO_X$. 
Choose a general element $f$ in $I$ so that $\Div_X(f)$ is reduced and no components of $\Div_X(f)$ are contained in the support of any $Y_i$. 
Then 
$$\J(X,V(I)+Y)=\adj_{\Div_X(f)}(X,Y)$$
(see also Claim 1 in the proof of Theorem \ref{restriction}). 
\end{enumerate}
\end{rem}

An analogue of local vanishing theorem \cite[Theorem 9.4.1]{La} holds for the adjoint ideal sheaf $\adj_D(X,Y)$ along a divisor $D$. 
\begin{prop}
Let the notation be the same as in Definition \ref{codimension one} $\textup{(ii)}$. 
Then for all $i>0$,
$$R^i\pi_*\sO_{\widetilde{X}}(\lceil K_{\widetilde{X}/X}-\pi^{-1}(Y)-\pi^*D+\pi^{-1}_*D \rceil)=0.$$ 
\end{prop}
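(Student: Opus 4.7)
The plan is to reduce the vanishing to two instances of local vanishing for multiplier ideals, one on the ambient $X$ and one on the divisor $D$, by means of the short exact sequence associated to the strict transform $\pi^{-1}_*D$. Write $\pi^*D=\pi^{-1}_*D+F$ with $F$ an effective $\pi$-exceptional integral divisor, and note that since $K_{\widetilde{X}/X}$, $\pi^*D$ and $\pi^{-1}_*D$ all have integer coefficients, the divisor appearing inside the statement equals
$$N:=K_{\widetilde{X}/X}-\lfloor\pi^{-1}(Y)\rfloor-F.$$

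Twisting $0\to\sO_{\widetilde{X}}(-\pi^{-1}_*D)\to\sO_{\widetilde{X}}\to\sO_{\pi^{-1}_*D}\to 0$ by $\sO_{\widetilde{X}}(N)$ gives
$$0\to\sO_{\widetilde{X}}(N-\pi^{-1}_*D)\to\sO_{\widetilde{X}}(N)\to\sO_{\pi^{-1}_*D}(N|_{\pi^{-1}_*D})\to 0.$$
A direct rewrite shows $N-\pi^{-1}_*D=\lceil K_{\widetilde{X}/X}-\pi^{-1}(D+Y)\rceil$, so the leftmost term realizes the sheaf computing $\J(X,D+Y)$; local vanishing for multiplier ideals on the $\Q$-Gorenstein variety $X$ yields that its $R^i\pi_*$ vanish for all $i>0$. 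The long exact sequence then produces an isomorphism $R^i\pi_*\sO_{\widetilde{X}}(N)\cong R^i\pi_*\sO_{\pi^{-1}_*D}(N|_{\pi^{-1}_*D})$ for $i>0$, reducing the problem to the vanishing of the right-hand side.

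To handle this, set $g:=\pi|_{\pi^{-1}_*D}\colon \pi^{-1}_*D\to D$. The SNC hypothesis, together with the assumption that no component of $D$ lies in any $Y_i$, guarantees that $g$ is a log resolution of $(D,Y|_D)$ with $\pi^{-1}_*D$ smooth. Combining the adjunction formulas $(K_{\widetilde{X}}+\pi^{-1}_*D)|_{\pi^{-1}_*D}=K_{\pi^{-1}_*D}$ and $K_D=(K_X+D)|_D$ yields the key identity $(K_{\widetilde{X}/X}-F)|_{\pi^{-1}_*D}=K_{\pi^{-1}_*D/D}$, whence
$$N|_{\pi^{-1}_*D}=\lceil K_{\pi^{-1}_*D/D}-g^{-1}(Y|_D)\rceil.$$
Because $D$ is a Cartier divisor on the $\Q$-Gorenstein variety $X$, $D$ itself is $\Q$-Gorenstein, so local vanishing for $\J(D,Y|_D)$ applied to $g$ gives $R^ig_*\sO_{\pi^{-1}_*D}(N|_{\pi^{-1}_*D})=0$ for $i>0$, and pushing forward along the closed immersion $D\hookrightarrow X$ completes the proof.

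The main obstacle I anticipate is the adjunction identification $(K_{\widetilde{X}/X}-F)|_{\pi^{-1}_*D}=K_{\pi^{-1}_*D/D}$, which has to be derived carefully by combining the two adjunction formulas above with the decomposition $\pi^*D=\pi^{-1}_*D+F$. The remaining steps are standard, provided one uses the version of local vanishing valid for $\Q$-Gorenstein ambient varieties; the requisite $\pi$-nefness in the underlying Kawamata--Viehweg application follows from the relative global generation of the line bundles $\sO_{\widetilde{X}}(-\pi^{-1}(Y_i))=\a_i\sO_{\widetilde{X}}$ (and their analogues for $g$).
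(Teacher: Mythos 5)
Your strategy is essentially the one the paper uses: twist the ideal sheaf exact sequence for $\widetilde{D}:=\pi^{-1}_*D$, identify the subsheaf with the one computing $\mathcal{J}(X,D+Y)$, identify the quotient with a multiplier-type sheaf on $\widetilde{D}$, and deduce the vanishing from two applications of vanishing on $X$ and on $D$. The bookkeeping $N-\widetilde{D}=\lceil K_{\widetilde{X}/X}-\pi^{-1}(D+Y)\rceil$ is correct (though note $K_{\widetilde{X}/X}$ is only a $\Q$-divisor since $X$ is $\Q$-Gorenstein, so you should keep the ceiling on that term too rather than writing $N=K_{\widetilde{X}/X}-\lfloor\pi^{-1}(Y)\rfloor-F$).

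There is, however, a genuine gap in the second half. You assert ``$D$ itself is $\Q$-Gorenstein'' and then apply local vanishing for $\mathcal{J}(D,Y|_D)$. But in the setup of Definition \ref{codimension one}(ii), $D$ is only a reduced Cartier divisor on the normal $\Q$-Gorenstein $X$; it need not be normal (nor even irreducible). For a non-normal $D$ there is no canonical divisor $K_D$ in the usual sense, the identity $(K_{\widetilde{X}/X}-F)|_{\widetilde{D}}=K_{\widetilde{D}/D}$ is not well-posed, and the local vanishing theorem for multiplier ideals (which presupposes a normal $\Q$-Gorenstein base) cannot be invoked directly. The paper circumvents this by factoring $\pi_D:\widetilde{D}\to D$ through the normalization $\nu:D^{\nu}\to D$ and replacing $K_D$ with the $\Q$-Cartier divisor $K_{D^{\nu}}+\mathrm{Diff}_{D^{\nu}}(0)$ (Shokurov's different), for which $K_{D^{\nu}}+\mathrm{Diff}_{D^{\nu}}(0)=\nu^*((K_X+D)|_D)$; Kawamata--Viehweg vanishing on $\widetilde{D}$ relative to $\mu:\widetilde{D}\to D^{\nu}$ then gives the required $R^i(\pi_D)_*=0$. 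To repair your argument you would need to insert exactly this normalization-and-different step in place of the direct appeal to ``$\Q$-Gorensteinness of $D$''; once that is done the two proofs coincide.
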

\begin{proof}
Set $B:=\lceil K_{\widetilde{X}/X}-\pi^{-1}(Y)-\pi^*D \rceil$ and $\widetilde{D}:=\pi^{-1}_*D$. 
Let $\nu:D^{\nu} \to D$ be the normalization of $D$, $\mu: \widetilde{D} \to D^{\nu}$ be the induced morphism and $\pi_D: \widetilde{D} \to D$ be the composite morphism. 
Then there exists an effective $\Q$-divisor ${\rm Diff}_{D^{\nu}}(0)$ on $D^{\nu}$, called the {\it different} of the zero divisor on $D^{\nu}$ (see \cite[\S 3]{Sh} for details), such that $K_{D^{\nu}}+{\rm Diff}_{D^{\nu}}(0)$ is $\Q$-Cartier and $K_{D^{\nu}}+{\rm Diff}_{D^{\nu}}(0)= \nu^*((K_X +D)|_D)$.
Now we have the following exact sequence
$$0 \to \sO_{\widetilde{X}}(B) \to \sO_{\widetilde{X}}(B+\widetilde{D}) \to \sO_{\widetilde{D}}(\lceil K_{\widetilde{D}}-\mu^*(K_{D^{\nu}}+{\rm Diff}_{D^{\nu}}(0))-\pi_D^{-1}(Y|_D)\rceil) \to 0.$$
It follows from Kawamata-Viehweg vanishing theorem that 
$$R^i \pi_*\sO_{\widetilde{X}}(B)=R^i {\pi_D}_*\sO_{\widetilde{D}}(\lceil K_{\widetilde{D}}-\mu^*(K_{D^{\nu}}+{\rm Diff}_{D^{\nu}}(0))-\pi_D^{-1}(Y|_D)\rceil)=0$$ for all $i>0$. 
Thus, we have $R^i \pi_*\sO_{\widetilde{X}}(B+\widetilde{D})=0$ for all $i>0$. 
\end{proof}

\begin{eg}
Let $X=\C^2=\Spec \C[x,y]$ be the two-dimensional affine space and let $D=(x^3+y^5=0) \subseteq X$. 
Then $\adj_D(X)=(x^2, xy, y^3)$, whereas $\J(X,D)=(x^3+y^5)$.  
\end{eg}

When the ambient variety is smooth, we can generalize the notion of adjoint ideal sheaves to the higher codimension case. 

Let $A$ be a nonsingular variety over a field $k$ of characteristic zero and $Y=\sum_{i=1}^m t_i Y_i$ be a formal combination, where the $t_i$ are positive real numbers and the $Y_i$ are proper closed subschemes of $A$. 
Let $X$ be a reduced closed subscheme of pure codimension $c$ of $A$ such that  no components of $X$ are contained in the support of any $Y_i$. 
Let $f:A':=\mathrm{Bl}_X A \to A$ be the blowing-up of $A$ along $X$ and $E_1, \dots, E_s$ be all the components of the exceptional divisor of $f$ dominating an irreducible component of $X$. 
Fix a log resolution $g:\widetilde{A} \to A'$ of $(A', f^{-1}(X)+ f^{-1}(Y))$ such that  $\sum_{j=1}^s g^{-1}_*E_j$ is nonsingular (but possibly disconnected), and put $\pi:=f \circ g:\widetilde{A} \to A$.  
\begin{defn}\label{adjoint def}
In the above situation, the \textit{adjoint ideal sheaf} $\adj_X(A,Y)$ associated to the pair $(A,Y)$ along $X$ is 
$$\adj_{X}(A,Y)=\pi_*\sO_{\widetilde{A}}(K_{\widetilde{A}/A}-\sum_{i=1}^m \lfloor t_i \pi^{-1}(Y_i) \rfloor -c \ \pi^{-1}(X)+\sum_{j=1}^s g^{-1}_*E_j) \subseteq \sO_A.$$
We denote this ideal sheaf simply by $\adj_{X}(A)$ when $Y=0$. 
We say that $(A,Y)$ (resp. $A$) is \textit{purely log terminal} (plt, for short) along $X$ if $\adj_X(A,Y)=\sO_A$ (resp. $\adj_X(A)=\sO_A$). 
When $X$ is a divisor, these definitions coincide with those given in Definition \ref{sing pairs} (ii) and Definition \ref{codimension one} (ii). 
\end{defn}

\begin{lem}\label{adjoint basic}
Let the notation be as in Definition \ref{adjoint def}. 
\renewcommand{\labelenumi}{$(\arabic{enumi})$}
\begin{enumerate}
\item
The adjoint ideal sheaf $\adj_X(A,Y)$ is independent of the choice of the log resolution used to define it. 
\item 
$(A,Y)$ is plt along $X$ if and only if 
$$\mathrm{mld}(X_{\mathrm{sing}} \cup \bigcup_{i=1}^m Y_i;A,cX+Y)>0,$$
where $X_{\mathrm{sing}}$ is the singular locus of $X$. 
More generally, the adjoint ideal sheaf $\adj_X(A,Y)$ is an ideal sheaf of $X$ whose sections over an open subset $U$ are those $\varphi \in \sO_X(U)$ such that for every divisor $E$ over $X$ whose center intersects $U$ and is contained in $X_{\mathrm{sing}} \cup \bigcup_{i=1}^m Y_i$, 
$$\mathrm{ord}_E(\varphi)+a(E;A, cX+Y) >0.$$
\end{enumerate}
\end{lem}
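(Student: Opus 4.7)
The plan is to derive both (1) and (2) from an inner characterization of $\adj_X(A,Y)(U)$ in terms of log discrepancies on the fixed log resolution $\pi: \widetilde{A} \to A$, and then to promote the characterization to arbitrary divisors over $A$ via a comparison argument between log resolutions.

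\textbf{Unpacking the definition.} The condition $\varphi \in \adj_X(A,Y)(U)$ translates, prime by prime, to
\[
\ord_F(\pi^*\varphi)+\ord_F(K_{\widetilde{A}/A})-\lfloor\ord_F(Y)\rfloor-c\,\ord_F(X)+\ord_F(\widetilde{X})\geq 0
\]
for every prime divisor $F$ on $\widetilde{A}$ meeting $\pi^{-1}(U)$, where $\widetilde{X} = \sum_{j=1}^{s} g^{-1}_{*}E_j$. When $F$ is a component of $\widetilde{X}$ (a strict transform of some $E_j$), the blow-up identities $\ord_{E_j}(K_{-/A})=c-1$, $\ord_{E_j}(X)=1$, and $\ord_{E_j}(Y_i)=0$ (the last because no component of $X$ lies in $Y_i$) collapse the inequality to $\ord_F(\pi^*\varphi)\geq 0$, which is automatic. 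Otherwise a short arithmetic manipulation rewrites the inequality as $\ord_F(\pi^*\varphi)+a(F;A,cX+Y)>0$. Finally, if $c_A(F)\not\subseteq X_{\mathrm{sing}}\cup\bigcup_i Y_i$, this is automatic: either $F$ is non-exceptional and $a(F;A,cX+Y)=1$; or $F$ is exceptional with center $W \subseteq X_{\mathrm{smooth}}$ of codimension $c+d$ in $A$ (in which case blowing up $W$ gives first discrepancy $a(F;A,cX)=d\geq 1$, and subsequent blow-ups can only increase it); or $F$ is exceptional with center disjoint from $X$, forcing $\ord_F(X)=0$ and $a\geq 2$.

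\textbf{Independence of the log resolution.} To prove (1) I would compare two log resolutions via a common refinement $\pi'=\pi\circ h:\widetilde{A}'\to\widetilde{A}\to A$. Computing $\pi'^{-1}$ via $h^{*}\pi^{-1}$, the divisors defining the two adjoint ideals differ by $K_{\widetilde{A}'/\widetilde{A}}-T-R$, where $R=h^{*}\widetilde{X}-h^{-1}_{*}\widetilde{X}$ and $T=\lfloor h^{*}\{\pi^{-1}(Y)\}\rfloor$; both are effective and $h$-exceptional. By the projection formula, equality of the two adjoint ideals reduces to
\[
h_{*}\sO_{\widetilde{A}'}\!\bigl(K_{\widetilde{A}'/\widetilde{A}}-T-R\bigr)=\sO_{\widetilde{A}},
\]
which follows from the standard pushforward identity for effective $h$-exceptional divisors once I verify the key bound $K_{\widetilde{A}'/\widetilde{A}}\geq T+R$. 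For each $h$-exceptional prime $F'$, this bound is equivalent to $a(F';\widetilde{A},\widetilde{X})\geq \ord_{F'}(T)+1$, which I would derive from the strict log-canonical inequality $a(F';\widetilde{A},\widetilde{X}+\{\pi^{-1}(Y)\})>0$ combined with the integrality of $a(F';\widetilde{A},\widetilde{X})$: the snc pair $(\widetilde{A},\widetilde{X}+\{\pi^{-1}(Y)\})$ has $\widetilde{X}$ as its only coefficient-$1$ component, and the center $c_{\widetilde{A}}(F')$ has codimension $\geq 2$ in $\widetilde{A}$, so it is not a log-canonical stratum.

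\textbf{Conclusion.} With (1) in hand, an arbitrary divisor $E$ over $A$ appears as a prime divisor on some log resolution dominating $\pi$, and applying the first paragraph on that resolution yields $\ord_E(\varphi)+a(E;A,cX+Y)>0$ for every $E$ with $c_A(E)\subseteq X_{\mathrm{sing}}\cup\bigcup_i Y_i$ meeting $U$; this establishes the valuative description in (2). The first part of (2) is the specialization $\varphi=1$: $\adj_X(A,Y)=\sO_A$ iff $a(E;A,cX+Y)>0$ for every $E$ with $c_A(E)\subseteq X_{\mathrm{sing}}\cup\bigcup_i Y_i$, equivalently $\mathrm{mld}(X_{\mathrm{sing}}\cup\bigcup_i Y_i;A,cX+Y)>0$. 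The main obstacle is the snc-discrepancy bound in the comparison step; the delicate point is absorbing the fractional parts of $\{\pi^{-1}(Y)\}$ (arising from the real weights $t_i$) via the strict snc inequality combined with integrality of $a(F';\widetilde{A},\widetilde{X})$.
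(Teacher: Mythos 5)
The overall strategy is essentially the paper's: for part (1) both arguments reduce to the plt-ness of the snc pair on $\widetilde{A}$ whose coefficient-one part $\widetilde{X}=\sum g^{-1}_{*}E_j$ is a \emph{disjoint} union of smooth divisors (the paper isolates this as a stand-alone Claim and invokes \cite[Cor.~2.31(3)]{KM}, you unpack the same computation directly via the effective $h$-exceptional divisor $K_{\widetilde{A}'/\widetilde{A}}-T-R$ and the integrality trick to absorb $\{\pi^{-1}(Y)\}$), and for part (2) both derive the valuative characterization from (1) together with the observation that divisors with center away from $X_{\mathrm{sing}}\cup\bigcup Y_i$ satisfy the inequality automatically. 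One substantive warning on your case analysis in the unpacking paragraph: the claim that blowing up a center $W\subseteq X_{\mathrm{smooth}}$ gives $a(F;A,cX)=d$ and that \emph{subsequent blow-ups can only increase it} is false as a general principle --- $cX$ with $c\ge 2$ has coefficient greater than $1$, so log discrepancies of $(A,cX)$ are not monotone under further blow-ups, and examples show intermediate blow-ups can drop $a$ back to the first value or lower. The correct justification is to pass to the crepant snc pair $(\mathrm{Bl}_X A, E)$ over the smooth locus of $X$, where the snc discrepancy bound applies; alternatively (as the paper does implicitly), choose $g:\widetilde{A}\to A'$ to blow up only inside $f^{-1}(X_{\mathrm{sing}}\cup\bigcup Y_i)$, so that on $\widetilde{A}$ every exceptional divisor whose center lies outside the bad locus is one of the $g^{-1}_{*}E_j$, and the problematic case never arises on the resolution you actually compute with. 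Since the erroneous claim is not load-bearing for the resolution $\pi$ fixed in Definition \ref{adjoint def}, this is a local repair rather than a structural flaw, but the statement as written should not stand.
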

\begin{proof}
Let  $f: A' \to A$ be the blowing-up of $A$ along $X$ and $E_1, \dots, E_s$ be all the components of the exceptional divisor of $f$ dominating an irreducible component of $X$. 
Put $E=E_1+\cdots+E_s$. 

(1) The proof is essentially the same as that of \cite[Theorem 9.2.18]{La}. 
We consider a sequence of morphisms $V \xrightarrow{\nu} \widetilde{A} \xrightarrow{\pi} A$, where $\pi$ is a log resolution of $(A,X+Y)$ such that the strict transform $\widetilde{E}$ of $E$ is nonsingular and $\nu$ is a log resolution of $(\widetilde{A}, \pi^{-1}(X)+\pi^{-1}(Y))$.  

\begin{cl}
Let $D$ be a reduced disconnected divisor on $\widetilde{A}$ with simple normal crossing support and $B$ be an $\R$-divisor on $\widetilde{A}$ with simple normal crossing support which has no common components with $D$. 
Suppose that $\mu:W \to \widetilde{A}$ is a log resolution of $D+B$. Then
$$\mu_*\sO_{W}(K_{W/\widetilde{A}}-\lfloor \mu^*(D+B) \rfloor +\mu^{-1}_*D)=\sO_{\widetilde{A}}(-\lfloor B \rfloor).$$ 
\end{cl}
\begin{proof}[Proof of Claim]
It follows from the projection formula that if the assertion holds for a given $\R$-divisor $B$, then it holds also for $B+B'$ whenever $B'$ is an integral divisor on $\widetilde{A}$. 
Therefore, we may assume that $\lfloor B \rfloor=0$. 
Setting $\Delta=D+B$, we have 
$$K_{W}+\mu^{-1}_*\Delta=\mu^*(K_{\widetilde{A}}+\Delta)+\sum_F (a(F;\widetilde{A},\Delta)-1)F,$$
where $F$ runs through all $\mu$-exceptional prime divisors on $W$. Then
$$K_{W/\widetilde{A}}- \lfloor \mu^*\Delta \rfloor +\mu^{-1}_*D=\lceil K_{W/\widetilde{A}}-\mu^*\Delta+\mu^{-1}_*\Delta \rceil=\sum_F \lceil a(F;\widetilde{A},\Delta)-1 \rceil F,$$
because $\lfloor B \rfloor=0$. 
Since $D$ is disconnected, by \cite[Corollary 2.31 (3)]{KM}, one has $a(F;\widetilde{A},\Delta)>0$ for all $\mu$-exceptional prime divisors $F$. This completes the proof.  
\end{proof}

By the above claim, we know that 
$${\nu}_*\sO_{V}(K_{V/\widetilde{A}}-\lfloor {\nu}^{-1}\pi^{-1}(Y) \rfloor -{\nu}^*\widetilde{E}+{\nu}^{-1}_*\widetilde{E})=\sO_{\widetilde{A}}(-\lfloor \pi^{-1}(Y) \rfloor). $$
Then, setting $h:=\pi \circ \nu$, one finds using the projection formula:
\begin{align*}
&h_*\sO_{V}(K_{V/\widetilde{A}}-\lfloor h^{-1}(Y) \rfloor -c \ h^{-1}(X)+\nu^{-1}_*\widetilde{E})\\
=&\pi_*\nu_*\left( \nu^*\sO_{\widetilde{A}}(K_{\widetilde{A}/A}-c \ \pi^{-1}(X)+\widetilde{E}) \otimes \sO_V(K_{V/\widetilde{A}}-\lfloor \nu^{-1}\pi^{-1}(Y) \rfloor -\nu^*\widetilde{E}+\nu^{-1}_*\widetilde{E})\right)\\
=&\pi_* \left(\sO_{\widetilde{A}}(K_{\widetilde{A}/A}-c \ \pi^{-1}(X)+\widetilde{E}) \otimes \nu_*\sO_{V}(K_{V/\widetilde{A}}-\lfloor \nu^{-1}\pi^{-1}(Y) \rfloor -\nu^*\widetilde{E}+\nu^{-1}_*\widetilde{E}) \right)\\
=&\pi_*\sO_{\widetilde{A}}(K_{\widetilde{A}/A}-\lfloor \pi^{-1}(Y) \rfloor-c \ \pi^{-1}(X)+\widetilde{E}).
\end{align*}
In other words, we obtain the same adjoint ideal sheaf working from $h$ as working from $\pi$. 
Since any two resolutions can be dominated by a third, the assertion follows. 

(2) Since no components of $X$ are contained in the support of any $Y_i$, $Y$ does not contribute to $a(E_i;A,cX+Y)$. By \cite[Lemma 2.29]{KM}, one has $a(E_i;A, cX+Y)=a(E_i;A,cX)=0$ for all $i=1, \dots, s$. 
We have already seen in (1) that the adjoint ideal sheaf is independent of the choice of the log resolution used to define it.  
Since 
\begin{align*}
K_{\widetilde{A}/A}-\lfloor \pi^{-1}(Y) \rfloor -c \ \pi^{-1}(X)+g^{-1}_*E =& \hspace{-0.5em} \sum_{F:\textup{divisor on $\widetilde{A}$}} \hspace{-0.5em} \lceil a(F;A,cX+Y)-1 \rceil F+g^{-1}_*E\\
=& \sum_{F \ne g^{-1}_*E_i}  \lceil a(F;A,cX+Y)-1 \rceil  F
\end{align*}
for every log resolution $g:\widetilde{A} \to A'$ of $(A',f^{-1}(X)+f^{-1}(Y))$ where $\pi=f \circ g: \widetilde{A} \to A$, the pair $(A,Y)$ is plt  along $X$ if and only if $a(F;A,c X+Y)>0$ for every divisor $F$ over $A$ dominating no components of $E$.   
This implies that if $(A,Y)$ is plt  along $X$, then $\mathrm{mld}(X_{\mathrm{sing}} \cup \bigcup_{i=1}^m Y_i;A,cX+Y)>0$. 
The converse follows from the fact that a log resolution of $(A', f^{-1}(X)+f^{-1}(Y))$ is obtained by blowing up subvarieties in $f^{-1}(X_{\mathrm{sing}} \cup \bigcup_{i=1}^n Y_i)$. 
We can prove the general case similarly.  
\end{proof}

\begin{eg}\label{adjoint example}
(1) Suppose that $X$ is locally a complete intersection variety, and consider the blowing-up $f:A'=\mathrm{Bl}_X A \to A$ of $A$ along $X$. 
Then the exceptional divisor $E:=f^{-1}(X)$ is a projective bundle over $X$. 
In particular, $E$ is normal and locally a complete intersection, and therefore, so is $A'$. 
By Hironaka's embedded resolution of singularities \cite{Hi}, there exists a log resolution $g:\widetilde{A} \to A'$ of $(A', E)$ which is an isomorphism over the complement of a proper closed subset of $E$. 
Let $\widetilde{E}$ be the strict transform of $E$ on $\widetilde{A}$, and put $\pi:=f \circ g: \widetilde{A} \to A$. 
Since 
$$K_{\widetilde{A}/A}-c \ \pi^{-1}(X)+ \widetilde{E}=K_{\widetilde{A}/A'}-g^{*}E+ \widetilde{E},$$
$A$ is plt along $X$ if and only if $A'$ is plt along $E$. 
Since $A' \setminus E$ is nonsingular, a result of Koll\'ar \cite[Theorem 5.50]{KM} says that  $A'$ is plt along $E$ if and only if $E$ is klt. 
This means that $X$ is klt, because $E$ is locally a product of $X$ and an affine space.
It therefore follows that $A$ is plt along $X$ if and only if $X$ is klt. 
That is, $\adj_X(A)$ defines the non-klt locus of $X$. 

(2) Let $X=\frac{1}{3}(1,1,1)$ be the quotient of $\C^3=\Spec \C[x_1, x_2, x_3]$ by the action of $\Z/ 3\Z$ given by $x_i \mapsto \xi x_i$, where $\xi$ is a primitive cubic root of unity. 
$X$ can be embedded into $A:=\C^{10}$, and we will compute the ideal sheaf $\adj_X(A)$. 
Let $\pi_1: A_1 \to A$ be the blowing-up of $A$ at the origin with exceptional divisor $E_1$ (we use the same letter for its strict transform). 
Then the weak transform $X_1$ of $X$ is nonsingular. Next, let $\pi_2:A_2 \to A_1$ be the blowing-up of $A_1$ along $X_1$ with exceptional divisor $E_2$.  
Setting $\pi:=\pi_1 \circ \pi_2:A_2 \to A$, we have $K_{A_2/A}=K_{A_2/A_1}+\pi_2^*K_{A_1/A}=9E_1+6E_2$ and $\pi^{-1}(X)=2E_1+E_2$. Thus,
$$\adj_X(A)=\pi_*\sO_{A_2}(K_{A_2/A}-7 \pi^{-1}(X)+E_2)=\pi_*\sO_{A_2}(-5E_1)=\m_{A, 0}^5,$$
where $\m_{A, 0} \subseteq \sO_{A}$ is the maximal ideal sheaf of the origin. 

(3) Let $X$ be the quotient of $(x^2+y^3+z^6=0) \subset \C^3$ by the action of $\Z/ 5\Z$ given by $x \mapsto \xi^3  x$, $y \mapsto \xi^2  y$ and $z \mapsto \xi  z$, where $\xi$ is a primitive quintic root of unity. Then $X$ can be embedded into $A:=\C^5$, and by an argument similar to that of (2), we have $\adj_X(A)=\m_{A,0}^2$, where $\m_{A, 0} \subseteq \sO_{A}$ is the maximal ideal sheaf of the origin. 
\end{eg}

\section{A modification of generalized test ideals}
In this section, we consider a modification of generalized test ideals of Hara and Yoshida \cite{HY},  which conjecturally corresponds to our adjoint ideal sheaf. 

Throughout this paper, all rings are Noetherian commutative rings with identity. 
For an integral domain $R$ and an unmixed ideal $I$ of $R$, we denote by $R^{\circ, I}$ the set of elements of $R$ that are not in any minimal prime ideal of $I$. 

 Let $R$ be an integral domain of characteristic $p>0$. 
For an ideal $J$ of $R$ and a power $q$ of $p$, we denote by $J^{[q]}$ the ideal of $R$ generated by the $q^{\rm th}$ powers of all elements of $J$. 
Let $F:R \to R$ be the Frobenius map, that is, the ring homomorphism sending $x$ to $x^p$. 
The ring $R$ viewed as an $R$-module via the $e$-times iterated Frobenius map $F^e \colon R \to R$ is denoted by ${}^e\! R$.
Since $R$ is reduced, $F^e:R \to {}^e\! R$ is identified with the natural inclusion map $R \hookrightarrow R^{1/p^e}$.
We say that $R$ is {\it F-finite} if ${}^1\! R$ (or $R^{1/p}$) is a finitely generated $R$-module. 
For example, any algebra essentially of finite type over a perfect field is F-finite. 

\begin{defn}[\textup{[Ta1, Definition 3.1]}]\label{F-pair}
Let $R$ be an F-finite domain of characteristic $p>0$ and $\underline{\a}^{\underline{t}}=\prod_{i=1}^m \a_i^{t_i}$ be a formal combination, where the $\a_i$ are nonzero ideals of $R$ and the $t_i$ are positive real numbers.  
\renewcommand{\labelenumi}{(\roman{enumi})}
\begin{enumerate}
\item 
The pair $(R, \underline{\a}^{\underline{t}})$ is said to be {\it strongly F-regular} if for every $\gamma \in R^{\circ}$, there exist $q=p^e$ and $\delta \in \a_1^{\lceil t_1q \rceil} \cdots \a_m^{\lceil t_mq \rceil}$ such that $(\gamma \delta)^{1/q}R \hookrightarrow R^{1/q}$ splits as an $R$-module homomorphism. 
\item 
Let $I \subseteq R$ be an unmixed ideal of height $c$ and suppose that $\a_i \cap R^{\circ, I} \ne \emptyset$ for all $i=1, \dots, m$. 
Then the pair $(R, \underline{\a}^{\underline{t}})$ is said to be {\it purely F-regular} along $I$ if for every $\gamma \in R^{\circ,I}$  there exist $q=p^e$ and $\delta \in I^{c(q-1)}\a_1^{\lceil t_1q \rceil} \cdots \a_m^{\lceil t_mq \rceil}$ such that $(\gamma \delta)^{1/q}R \hookrightarrow R^{1/q}$ splits as an $R$-module homomorphism. 
We say that $R$ is purely F-regular along $I$ if so is the pair $(R,R^1)$. 
\end{enumerate}
\end{defn}

Let $R$ be an integral domain of characteristic $p > 0$ and $M$ be an $R$-module. 
For each $q=p^e$, we denote $\F^{e}(M)= \F_{R}^{e}(M) :=  {}^e\! R \otimes_R M$ and regard it as an $R$-module by the action of $R={}^e R$ from the left. 
Then we have the $e$-times iterated Frobenius map $F_M^{e} \colon M \to 
\F^{e}(M)$ induced on $M$. The image of an element $z \in M$ via this map is denoted by $z^q:= F_M^{e}(z) \in \F^{e}(M)$. 
For an $R$-submodule $N$ of $M$, we denote by $N^{[q]}_{M}$ the 
image of the induced map $\F^{e}(N) \to \F^{e}(M)$. 
If $I $ is an ideal of $R$, then $I^{[q]}_{R}=I^{[q]}$. 

Now we introduce a new generalization of tight closure and the corresponding test ideal. 
\begin{defn}\label{test ideal}
Let $R$ be an excellent domain of characteristic $p>0$ and $I \subseteq R$ be an unmixed ideal of height $c$.
Let $\underline{\a}^{\underline{t}}=\prod_{i=1}^m \a_i^{t_i}$ be a formal combination, where the $\a_i$ are ideals of $R$ such that $\a_i \cap R^{\circ, I} \ne \emptyset$ and the $t_i$ are positive real numbers. 
\renewcommand{\labelenumi}{(\roman{enumi})}
\begin{enumerate}
\item If $N \subseteq M$ are $R$-modules, then the \textit{$(I,\underline{\a}^{\underline{t}})$-tight closure} $N^{*(I,\underline{\a}^{\underline{t}})}_M$ of $N$ in $M$ is defined to be the submodule of $M$ consisting of all elements $z \in M$ for which there exists $\gamma \in R^{\circ, I}$ such that 
$$\gamma I^{c(q-1)}\a_1^{\lceil t_1q \rceil} \dots \a_m^{\lceil t_m q \rceil}z^q \subseteq N^{[q]}_M$$ 
for all large $q = p^e$.

\item Let $E=\oplus_{\m} E_R(R/\m)$ be the direct sum, taken over all maximal ideals $\m$ of $R$, of the injective hulls of the residue fields $R/\m$.
The \textit{generalized test ideal} $\widetilde{\tau}_I(R,\underline{\a}^{\underline{t}})$ associated to the pair $(R,\underline{\a}^{\underline{t}})$ along $I$ is 
$$\widetilde{\tau}_I(R,\underline{\a}^{\underline{t}})=\mathrm{Ann}_R(0^{*(I,\underline{\a}^{\underline{t}})}_E) \subseteq R. $$ 
We denote this ideal simply by $\widetilde{\tau}_I(R)$ when $\a_i=R$ for all  $i=1, \dots, m$. 
\end{enumerate}
\end{defn}

\begin{rem}
When $R$ is a normal domain and $I=xR$ is a principal ideal, $(I, \underline{\a}^{\underline{t}})$-tight closure coincides with divisorial $(\Div(x),\underline{\a}^{\underline{t}})$-tight closure introduced in \cite{Ta2}. 
\end{rem}

\begin{deflem}\label{test element}
Let $R$ be an excellent domain of characteristic $p>0$ and $I$ be an unmixed ideal of height $c$. 
Let $E=\bigoplus_{\m} E_R(R/\m)$ be the direct sum, taken over all maximal ideals $\m$ of $R$, of the injective hulls of the residue fields $R/\m$. Fix an element $\gamma \in R^{\circ, I}$. 
We say that $\gamma$ is an $(I,*)$-test element for $E$ if for all $\underline{\a}^{\underline{t}}=\prod_{i=1}^m \a_i^{t_i}$, where the $\a_i$ are ideals of $R$ such that $\a_i \cap R^{\circ, I} \ne \emptyset$ and the $t_i$ are positive real numbers,  one has $\gamma I^{c(q-1)}\a_1^{\lceil t_1q \rceil} \dots \a_m^{\lceil t_m q \rceil}z^q=0$ in $\F^e(E)$ for every $z \in 0^{*(I,\underline{\a}^{\underline{t}})}_E$and for every $q=p^e$. 
If $R$ is F-finite and the localized ring $R_{\gamma}$ is purely F-regular along $IR_{\gamma}$, then some power $\gamma^N$ of $\gamma$ is an $(I,*)$-test element for $E$. 
\end{deflem}
\begin{proof}
It follows from an argument similar to \cite{HH} (see also the proofs of \cite[Theorem 1.7]{HY} and \cite[Corollary 3.10 (2)]{Ta2}). 
\end{proof}

\begin{prop}\label{completion}
Let $(R,\m)$ be an F-finite local domain of characteristic $p>0$ and $I$ be an unmixed ideal of height $c$.
Let $\underline{\a}^{\underline{t}}=\prod_{i=1}^m \a_i^{t_i}$ be a formal combination, where the $\a_i$ are ideals of $R$ such that $\a_i \cap R^{\circ, I} \ne \emptyset$ and the $t_i$ are positive real numbers. 
\renewcommand{\labelenumi}{$(\arabic{enumi})$}
\begin{enumerate}
\item
Let $W$ be a multiplicatively closed subset of $R$, and $\underline{\a}_W^{\underline{t}}$ and $I_W$ be the images of $\underline{\a}^{\underline{t}}$ and $I$ in $R_W$, respectively. Then
$$\widetilde{\tau}_{I_W}(R_W,\underline{\a}_W^{\underline{t}}) =\widetilde{\tau}_I(R,\underline{\a}^{\underline{t}})R_W.$$
\item
Let $\widehat{R}$ be the $\m$-adic completion of $R$, and $\widehat{\underline{\a}}^{\underline{t}}$ and $\widehat{I}$ be the images of $\underline{\a}^{\underline{t}}$ and $I$ in $\widehat{R}$, respectively.
Then
$$\widetilde{\tau}_{\widehat{I}}(\widehat{R},\widehat{\underline{\a}}^{\underline{t}})=\widetilde{\tau}_I(R,\underline{\a}^{\underline{t}})\widehat{R}.$$
\item 
$(R,\underline{\a}^{\underline{t}})$ is purely F-regular along $I$ if and only if $\widetilde{\tau}_I(R,\underline{\a}^{\underline{t}})=R$. 
\item 
If $R$ is an F-finite regular local ring and $\gamma \in R^{\circ, I}$ is an $(I,*)$-test element for the injective hull $E_R(R/\m)$ of the residue field $R/\m$, then $\widetilde{\tau}_I(R,\underline{\a}^{\underline{t}})$ is the unique smallest ideal $J$ of $R$ with respect to inclusion, such that 
$$\gamma I^{c(q-1)}\a_1^{\lceil t_1q \rceil} \dots \a_m^{\lceil t_m q \rceil} \subseteq J^{[q]}$$
for all $($large$)$ $q=p^e$. 
\end{enumerate}
\end{prop}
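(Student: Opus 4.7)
The four statements are the standard ``formal properties'' of a generalized test ideal, and my plan is to adapt the proofs of the corresponding results for $\widetilde\tau(\underline{\a}^{\underline{t}})$ from \cite{HY} (see especially Proposition~1.9 and Theorem~1.13 there) and from \cite{Ta2}, with the only modification being the systematic insertion of the factor $I^{c(q-1)}$ into the exponents. Throughout, the crucial enabling fact is the existence of $(I,*)$-test elements (Definition-Lemma \ref{test element}): it lets us replace the quantifier ``for all large $q$'' in the definition of $(I,\underline{\a}^{\underline{t}})$-tight closure by ``for all $q=p^e$,'' which in turn makes tight closure compatible with the formal operations in (1)--(4). Setting $\underline{\mathfrak{b}}_q:=I^{c(q-1)}\a_1^{\lceil t_1q\rceil}\cdots \a_m^{\lceil t_mq\rceil}$ for brevity, and fixing once and for all an $(I,*)$-test element $\gamma\in R^{\circ,I}$, one has
\[
0^{*(I,\underline{\a}^{\underline{t}})}_E=\{z\in E\mid \gamma\,\underline{\mathfrak{b}}_q\,z^q=0\text{ in }\F^e(E)\text{ for all }q=p^e\}.
\]

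For (1) and (2), the first step is to check that the image of a power of $\gamma$ remains an $(I_W,*)$-test element in $R_W$ (resp.\ an $(\widehat I,*)$-test element in $\widehat R$); in the localization case this is automatic, and in the completion case it follows from F-finiteness of $\widehat R$ together with the fact that $R_\gamma$ being purely F-regular along $IR_\gamma$ implies $\widehat R_\gamma$ is as well. Once both ideals admit the above ``test-element'' description, the localization statement reduces to the obvious compatibility of $E=\bigoplus_\m E_R(R/\m)$ with $W^{-1}$, and the completion statement reduces to the faithful flatness of $R\to\widehat R$ and the identity $E=E_{\widehat R}$ for $(R,\m)$ local.

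For (3), the equivalence $\widetilde\tau_I(R,\underline{\a}^{\underline{t}})=R\iff 0^{*(I,\underline{\a}^{\underline{t}})}_E=0$ is tautological. The direction $\widetilde\tau_I=R\Rightarrow$ purely F-regular along $I$ is the standard Hochster--Huneke style argument via Matlis duality: if $(R,\underline{\a}^{\underline{t}})$ were not purely F-regular along $I$, one produces an element $\gamma\in R^{\circ,I}$ such that the maps $R\xrightarrow{(\gamma\delta)^{1/q}}R^{1/q}$ never split for any $\delta\in\underline{\mathfrak{b}}_q$, and dualising into $E$ manufactures a nonzero element of $0^{*(I,\underline{\a}^{\underline{t}})}_E$. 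Conversely, given a splitting $\sigma:R^{1/q}\to R$ sending $(\gamma\delta)^{1/q}\mapsto 1$ for some $\delta\in\underline{\mathfrak{b}}_q$, one applies $\sigma\otimes_R E$ to the relation $\gamma\delta\,z^q=0$ in $\F^e(E)$ to conclude $z=0$.

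For (4), after replacing $R$ by $\widehat R$ via (2), the statement follows by Matlis duality applied to the test-element description of $0^{*(I,\underline{\a}^{\underline{t}})}_E$. Flatness of the Frobenius on the regular ring $R$ ensures that for any ideal $J\subseteq R$ one has $\mathrm{Ann}_E(J)^{[q]}=\mathrm{Ann}_{\F^e(E)}(J^{[q]})$ under the natural identification $\F^e(E)\cong E$ as sets; consequently an ideal $J$ annihilates $0^{*(I,\underline{\a}^{\underline{t}})}_E$ iff $\gamma\underline{\mathfrak{b}}_q\subseteq J^{[q]}$ for every $q=p^e$, and the smallest such $J$ is $\widetilde\tau_I(R,\underline{\a}^{\underline{t}})$ by definition.

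I expect the main technical obstacle to lie in part (3): one must manipulate the splitting $\sigma:R^{1/q}\to R$ against an element $z\in E$ whose vanishing is only known after multiplication by the whole ideal $\gamma\underline{\mathfrak{b}}_q$, and the bookkeeping of which test element $\gamma$ (versus the ``fresh'' $\gamma$ chosen by purely F-regular along $I$) gets used requires care. The remaining obstacle is the Matlis-duality bookkeeping in (4), where one must verify the compatibility of the Frobenius operator on $\F^e(E)$ with the annihilator-duality $\mathrm{Ann}_E\circ\mathrm{Ann}_R=\mathrm{id}$ in a way that respects the ideal $\underline{\mathfrak{b}}_q$; this is standard in the regular F-finite setting but needs to be written out carefully.
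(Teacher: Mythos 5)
Your proposal is correct and takes essentially the same approach as the paper: the paper's proof of this proposition consists solely of pointing the reader to \cite[Propositions 3.1 and 3.2]{HT} for (1) and (2), \cite[Proposition 2.1]{Ha1} (and \cite[Corollary 3.5]{Ta1}) for (3), and \cite[Proposition 2.22]{BMS} for (4), with the understanding that one inserts the factor $I^{c(q-1)}$ throughout. Your sketch simply unpacks what those citations entail, and the two technical points you flag at the end (choosing the test element $\gamma$ uniformly in (3) so that ``for all large $q$'' can be upgraded, and the Matlis-duality/flat-Frobenius bookkeeping in (4)) are exactly the places where those cited arguments do the real work.
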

\begin{proof}
(1) and (2) follow from arguments similar to the proofs of \cite[Propositions 3.1 and 3.2]{HT}, respectively. 
(3) follows from an argument similar to the proof of \cite[Proposition 2.1]{Ha1} (see also \cite[Corollary 3.5]{Ta1}) and (4) does from an argument similar to the proof of \cite[Proposition 2.22]{BMS}. 
\end{proof}

\begin{eg}
Let $(R,\m)$ be an F-finite regular local ring of characteristic $p>0$ and $I=(f_1, \dots, f_c) \subseteq R$ be an unmixed ideal generated by a regular sequence $f_1, \dots, f_c$. 
Let $\gamma \in R^{\circ, I}$ be an element such that the localized ring $R_{\gamma}/IR_{\gamma}$ is regular, and take a sufficiently large integer $N$.  
By Definition-Proposition \ref{test element} and Proposition \ref{completion} (4), $R$ is purely F-regular along $I$ if and only if there exists $q=p^e$ such that $\gamma^NI^{c(q-1)} \not\subseteq \m^{[q]}$. 
Since $(f_1 \cdots f_c)^{q-1} \in I^{c(q-1)} \subseteq (f_1 \cdots f_c)^{q-1}R+I^{[q]}=(I^{[q]}:I)$, this is equivalent to saying that there exists $q=p^e$ such that $\gamma^N(I^{[q]}:I) \not\subseteq \m^{[q]}$. 
It therefore follows from \cite[Theorem 2.1]{Gl} that $R$ is purely F-regular along $I$ if and only if $R/I$ is strongly F-regular. 
That is, the generalized test ideal $\widetilde{\tau}_I(R)$ along $I$ defines the non-strongly-F-regular locus of the ring $R/I$.  
\end{eg}

\begin{thm}\label{I < adj}
Let $(R,\m)$ be a $d$-dimensional F-finite regular local ring of characteristic $p>0$ and $I \subseteq R$ be an unmixed ideal of height $c$.
Let $\underline{\a}^{\underline{t}}=\prod_{i=1}^m \a_i^{t_i}$ be a formal combination, where the $\a_i$ are ideals of $R$ such that $\a_i \cap R^{\circ, I} \ne \emptyset$ and the $t_i$ are positive real numbers. 
Set $A=\Spec R$, $X=V(I) \subseteq A$ and $Y_i=V(\a_i) \subseteq A$. 
Let  $f: A' \to A$ be the blowing-up of $A$ along $X$ and $E_1, \dots, E_s$ be all the components of the exceptional divisor of $f$ dominating an irreducible component of $X$.  
Suppose that $\pi:\widetilde{A} \to A$ is a proper birational morphism from a normal scheme $\widetilde{A}$ such that  the scheme theoretic inverse images $\pi^{-1}(X)$ and $\pi^{-1}(Y_i)$ are Cartier divisors, and denote by $\widetilde{E}$ the strict transform of $E:=E_1+\cdots+E_S$ on $\widetilde{A}$. 
Then one has an inclusion 
$$\widetilde{\tau}_I(R,\underline{\a}^{\underline{t}}) \subseteq H^0(\widetilde{A}, \sO_{\widetilde{A}}(K_{\widetilde{A}/A}- \sum_{i=1}^m \lfloor t_i \pi^{-1}(Y_i)\rfloor -c \ \pi^{-1}(X) +\widetilde{E})).$$
\end{thm}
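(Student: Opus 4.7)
The plan is to deduce the inclusion from Proposition \ref{completion} (4), which characterizes $\widetilde{\tau}_I(R,\underline{\a}^{\underline{t}})$ as the smallest ideal satisfying a certain Frobenius-power containment, and then to verify that the candidate ideal $J_0 := H^0(\widetilde{A},\sO_{\widetilde{A}}(K))$ on the right satisfies the same containment, where $K := K_{\widetilde{A}/A} - \sum_i \lfloor t_i \pi^{-1}(Y_i)\rfloor - c\,\pi^{-1}(X) + \widetilde{E}$.

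First, by Definition-Lemma \ref{test element}, I would choose $\gamma \in R^{\circ,I}$ so that $R_\gamma/IR_\gamma$ is strongly F-regular (possible because the strongly F-regular locus of $R/I$ is a nonempty open subset of $\Spec R/I$), and replace $\gamma$ by a sufficiently large power so that $\gamma$ is an $(I,*)$-test element for $E=E_R(R/\m)$. By Proposition \ref{completion} (4), $\widetilde{\tau}_I(R,\underline{\a}^{\underline{t}})$ is the unique smallest ideal $J\subseteq R$ satisfying
\[
\gamma\,I^{c(q-1)}\a_1^{\lceil t_1q\rceil}\cdots\a_m^{\lceil t_mq\rceil}\subseteq J^{[q]}
\]
for all large $q=p^e$. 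By minimality, it suffices to verify that $J_0$ itself satisfies this Frobenius-power containment.

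The containment would then be checked by passing to $\widetilde{A}$ and adapting the strategy of Hara-Yoshida's comparison between test ideals and multiplier ideals \cite{HY}. Since $J_0\cdot\sO_{\widetilde{A}}\subseteq\sO_{\widetilde{A}}(K)$ by the very definition of $J_0$, while $I\cdot\sO_{\widetilde{A}}=\sO_{\widetilde{A}}(-\pi^{-1}(X))$ and $\a_i\cdot\sO_{\widetilde{A}}=\sO_{\widetilde{A}}(-\pi^{-1}(Y_i))$, one can, via Grothendieck duality along $\pi$ and the Cartier operator (applied on a further log resolution if $\widetilde{A}$ itself is not regular), translate the ideal containment in $R$ into the divisorial inequality
\[
\mathrm{ord}_F(\gamma)+c(q-1)\,\mathrm{ord}_F(\pi^{-1}(X))+\sum_i\lceil t_iq\rceil\,\mathrm{ord}_F(\pi^{-1}(Y_i))\ \geq\ -q\,\mathrm{ord}_F(K)
\]
at every prime divisor $F$ on $\widetilde{A}$, for all large $q$. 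For prime divisors $F$ of $\widetilde{A}$ outside $\widetilde{E}$ with strictly positive log-discrepancy $a(F;A,cX+Y)>0$, the inequality follows from the positivity of the discrepancy for large $q$; for the finitely many $F$ outside $\widetilde{E}$ with $a(F;A,cX+Y)\leq 0$, one absorbs them by enlarging $\gamma$ to vanish along their $\pi$-image, following the standard trick in \cite{HY}.

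The main obstacle, and the new ingredient beyond Hara-Yoshida, is the case when $F$ is a component of $\widetilde{E}$: such $F$ dominates a component of $X$ and has $a(F;A,cX)=0$ by construction, so no log-discrepancy slack is available. For such $F$ one has $\mathrm{ord}_F(\pi^{-1}(X))=1$, $\mathrm{ord}_F(\widetilde{E})=1$, and $\mathrm{ord}_F(K_{\widetilde{A}/A})=c-1$, so the required inequality collapses to
\[
c(q-1)+\mathrm{ord}_F(\gamma)+\sum_i\Big(\lceil t_iq\rceil\,\mathrm{ord}_F(\pi^{-1}(Y_i))-q\lfloor t_i\,\mathrm{ord}_F(\pi^{-1}(Y_i))\rfloor\Big)\geq 0,
\]
which holds automatically, since each bracketed rounding-difference term is nonnegative and $c(q-1)+\mathrm{ord}_F(\gamma)\geq 0$. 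The precise matching between the $c(q-1)$ shift on the left and the $+\widetilde{E}$ correction in $K$ is exactly what makes the definitions of $\widetilde{\tau}_I(R,\underline{\a}^{\underline{t}})$ and $\adj_X(A,Y)$ compatible in this higher-codimension setting; the duality-theoretic reduction from ideal containment to divisor inequality is the core technical step of the argument.
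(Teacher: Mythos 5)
Your high-level plan — invoke Proposition \ref{completion} (4) and then verify that $J_0 := H^0(\widetilde{A},\sO_{\widetilde{A}}(K))$ itself satisfies the Frobenius-power containment $\gamma I^{c(q-1)}\a_1^{\lceil t_1q\rceil}\cdots\a_m^{\lceil t_mq\rceil}\subseteq J_0^{[q]}$ — is a legitimate alternative to the paper's route, which instead works with the edge map $\delta\colon H^d_\m(R)\to H^d_Z(\sO_{\widetilde{A}}(\lfloor\pi^{-1}(Y)\rfloor+c\,\pi^{-1}(X)-\widetilde{E}))$, identifies $J_0=\Ann_R(\Ker\delta)$ by local duality, and proves directly that $\Ker\delta\subseteq 0^{*(I,\underline{\a}^{\underline{t}})}_{H^d_\m(R)}$.

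However, the way you verify the containment has a genuine gap: you ``translate the ideal containment in $R$ into the divisorial inequality'' by checking $\ord_F$ at every prime divisor $F$ of $\widetilde{A}$. Such order conditions only show that $\gamma I^{c(q-1)}\prod\a_i^{\lceil t_iq\rceil}$ maps into $\sO_{\widetilde{A}}(qK)$ upon pullback, hence that it lies in $H^0(\widetilde{A},\sO_{\widetilde{A}}(qK))$. But this does \emph{not} imply membership in $J_0^{[q]}$: since $J_0\sO_{\widetilde{A}}\subseteq\sO_{\widetilde{A}}(K)$ (and the containment is typically strict), one has $J_0^{[q]}\subseteq H^0(\widetilde{A},\sO_{\widetilde{A}}(qK))$, so your inclusion points in the wrong direction. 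Checking vanishing orders along divisors can only detect integral closure, not the actual Frobenius power of a non-principal ideal. The step that closes this gap — descending a divisorial containment on $\widetilde{A}$ to a containment in $J_0^{[q]}$ in $R$ — is exactly the duality/trace-map input of the Hara--Yoshida method; the paper supplies it through the local cohomology spectral sequence and Matlis/local duality, and the alternative cited as \cite[Proposition 3.8]{Ta1} supplies it through a Cartier-operator trace argument. You gesture at ``Grothendieck duality along $\pi$ and the Cartier operator'' but never actually produce the needed map $F^e_*\sO_{\widetilde{A}}(qK')\to\sO_{\widetilde{A}}(K')$ and its pushforward, so as written the proposal proves the wrong inclusion. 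Your numerology at the $\widetilde{E}$-components ($\ord_F(\pi^{-1}(X))=1$, $\ord_F(K_{\widetilde{A}/A})=c-1$, and the $c(q-1)$ vs.~$+\widetilde{E}$ bookkeeping) is correct and does identify the new feature of the higher-codimension case, but it is attached to an argument that does not yet establish the stated ideal containment.
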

\begin{proof}
The proof follows from essentially the same argument as that of \cite[Proposition 3.8]{HY} (see also the proof of \cite[Proposition 3.8]{Ta1} for a different strategy). 
For simplicity, we assume that $\widetilde{A}$ is a Cohen-Macaulay scheme. 
Denote the closed fiber of $\pi$ by $Z:=\pi^{-1}(\m)$ and set $\pi^{-1}(Y):=\sum_{i=1}^m t_i \pi^{-1}(Y_i)$. 
Let 
$$\delta:H^d_{\m}(R) \to H^d_Z(\sO_{\widetilde{A}}(\lfloor \pi^{-1}(Y) \rfloor+c \ \pi^{-1}(X) -\widetilde{E}))$$ be the edge map $H^d_{\m}(R) \to H^d_{Z}(\sO_{\widetilde{A}})$ of the spectral sequence $H^i_{\m}(R^j \pi_*\sO_{\widetilde{A}}) \Rightarrow H^{i+j}_Z(\sO_{\widetilde{A}})$ followed by the natural map 
$$H^d_{Z}(\sO_{\widetilde{A}}) \to H^d_{Z}(\sO_{\widetilde{A}}(\lfloor \pi^{-1}(Y) \rfloor + c \ \pi^{-1}(X) -\widetilde{E})).$$
By the local duality theorem (see \cite[V, \S 6]{Hart}), one has
$$\Ann_R (\Ker \delta)=H^0(\widetilde{A}, \sO_{\widetilde{A}}(K_{\widetilde{A}/A}-\lfloor \pi^{-1}(Y) \rfloor - c \ \pi^{-1}(X) +\widetilde{E} )). $$
It therefore suffices to show that $\Ker \delta \subseteq 0^{*(I, \underline{\a}^{\underline{t}})}_{H^d_{\m}(R)}$. 
Take an element $\gamma \in R^{\circ, I}$ such that $R_{\gamma}/IR_{\gamma}$ is regular. 
Since $H^0(\widetilde{A}, \sO_{\widetilde{A}}(K_{\widetilde{A}/A}- c \ \pi^{-1}(X) +\widetilde{E} ))_{\gamma}=R_{\gamma}$, 
for sufficiently large integers $N \gg 0$, one has 
\begin{align*}
\gamma^N \a_1^{\lceil t_1 \rceil} \dots \a_m^{\lceil t_m \rceil} & \subseteq \a_1^{\lceil t_1 \rceil} \dots \a_m^{\lceil t_m  \rceil}H^0(\widetilde{A}, \sO_{\widetilde{A}}(K_{\widetilde{A}/A}- c \ \pi^{-1}(X) +\widetilde{E} ))\\
&\subseteq H^0(\widetilde{A}, \sO_{\widetilde{A}}(K_{\widetilde{A}/A}-\lfloor \pi^{-1}(Y) \rfloor - c \ \pi^{-1}(X) +\widetilde{E} ))\\
&=\Ann_R (\Ker \delta).
\end{align*}
By Definition-Lemma \ref{test element}, this inclusion tells us that there exists an $(I, *)$-test element $\gamma' \in \Ann_R (\Ker \delta) \cap R^{\circ, I}$ for $H^d_{\m}(R)$, because $\a_i \cap R^{\circ, I} \ne \emptyset$ for all $i=1, \dots, m$. 
For every $q=p^e$ and for every 
$$\alpha \in I^{c(q-1)}\a_1^{\lceil t_1q \rceil} \dots \a_m^{\lceil t_m q \rceil}  \subseteq H^0(\widetilde{A}, \sO_{\widetilde{A}}(-q\lfloor \pi^{-1}(Y) \rfloor - c(q-1)\pi^{-1}(X))),$$ 
we have the following commutative diagram with exact rows:
\[
\xymatrix{
 0 \ar[r] & \Ker \delta \ar[d] \ar[r] & H^d_{\m}(R) \ar[d]^{\alpha F^e} \ar[r]^{\hspace*{-7.5em}\delta} &H^d_{Z}(\sO_{\widetilde{A}}(\lfloor \pi^{-1}(Y) \rfloor + c \ \pi^{-1}(X) -\widetilde{E}))) \ar[d]^{\alpha F^e} \ar[r] & 0\\
0 \ar[r] & \Ker \delta \ar[r] & H^d_{\m}(R) \ar[r]^{\hspace*{-7.5em}\delta} & H^d_{Z}(\sO_{\widetilde{A}}(\lfloor \pi^{-1}(Y) \rfloor + c \ \pi^{-1}(X) -\widetilde{E})))  \ar[r] & 0 \\
}
\]
Then $\alpha F^e(\Ker \delta) \subseteq \Ker \delta$. 
By the choice of the element $\gamma'$, we can conclude that $\gamma' I^{c(q-1)}\a_1^{\lceil t_1q \rceil} \dots \a_m^{\lceil t_m q \rceil} F^e(\Ker \delta)=0$ for all $q=p^e$, that is, $\Ker \delta \subseteq 0^{*(I, \underline{\a}^{\underline{t}})}_{H^d_{\m}(R)}$. 
\end{proof}

We conjecture that the generalized test ideal $\widetilde{\tau}_I(R,\underline{\a}^{\underline{t}})$ along $I$ corresponds to the adjoint ideal sheaf $\adj_X(A,Y)$. 
\begin{conj}\label{correspond conj}
Let $(R,\m)$ be a regular local ring essentially of finite type over a perfect field of prime characteristic $p$, and let $I \subseteq R$ be a nonzero unmixed ideal. 
Let $\underline{\a}^{\underline{t}}=\prod_{i=1}^m \a_i^{t_i}$ be a formal combination, where the $\a_i$ are ideals of $R$ such that $\a_i \cap R^{\circ, I} \ne \emptyset$ and the $t_i$ are positive real numbers.
Set $A:=\Spec R$, $X:=V(I)$ and $Y:=\sum_{i=1}^m t_i V(\a_i)$.  
Assume in addition that $(R,I, \underline{\a})$ is reduced from characteristic zero to characteristic $p \gg 0$, together with a log resolution $\pi:\widetilde{A} \to A$ of $(A,X+Y)$ used to define the adjoint ideal sheaf $\adj_X(A,Y)$ as in Definition \ref{adjoint def}. Then
$$\adj_X(A,Y)=\widetilde{\tau}_{I}(R,\underline{\a}^{\underline{t}}).$$
\end{conj}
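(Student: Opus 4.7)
The plan is to prove the two inclusions separately. For $\widetilde{\tau}_I(R,\underline{\a}^{\underline{t}}) \subseteq \adj_X(A,Y)$, I would apply Theorem \ref{I < adj} with $\pi$ taken to be the log resolution used in Definition \ref{adjoint def}; in that setup the strict transform $\widetilde{E}$ appearing in Theorem \ref{I < adj} coincides with $\sum_{j=1}^s g^{-1}_* E_j$, so the displayed right-hand sides match, once one has spread out $\pi$ to characteristic $p \gg 0$ in the standard way of \cite{HY}.

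For the reverse inclusion $\adj_X(A,Y) \subseteq \widetilde{\tau}_I(R,\underline{\a}^{\underline{t}})$, the plan is to adapt the trace argument of \cite[Theorem 3.4]{HY}. The proof of Theorem \ref{I < adj} identifies $\adj_X(A,Y)=\Ann_R(\Ker\delta)$ where $\delta$ is the edge map
$$\delta:H^d_{\m}(R) \longrightarrow H^d_Z\bigl(\sO_{\widetilde{A}}(\textstyle\sum_i \lfloor t_i\pi^{-1}(Y_i)\rfloor + c\,\pi^{-1}(X) - \widetilde{E})\bigr),$$
and shows $\Ker\delta \subseteq 0^{*(I,\underline{\a}^{\underline{t}})}_{H^d_{\m}(R)}$. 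What is missing is the reverse containment $0^{*(I,\underline{\a}^{\underline{t}})}_{H^d_{\m}(R)} \subseteq \Ker\delta$. Given a class $\eta$ in the tight closure with $\gamma I^{c(q-1)}\a_1^{\lceil t_1q\rceil}\cdots\a_m^{\lceil t_mq\rceil}\eta^q=0$ in $\F^e(H^d_{\m}(R))$, I would propagate this vanishing to $\delta(\eta)=0$ via the compatibility of $\delta$ with $F^e$ and a Grothendieck trace on $\widetilde{A}$. The key geometric input is the local vanishing
$$R^i\pi_*\sO_{\widetilde{A}}\bigl(K_{\widetilde{A}/A} - \textstyle\sum_i \lfloor t_i\pi^{-1}(Y_i)\rfloor - c\,\pi^{-1}(X) + \widetilde{E}\bigr) = 0 \quad (i>0),$$
the higher-codimension adjoint analogue of the divisorial local vanishing stated earlier in the paper, provable in characteristic zero by Kawamata--Viehweg combined with a residue argument on the smooth disconnected divisor $\widetilde{E}$, and preserved after reduction mod $p \gg 0$.

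The main obstacle, and the reason the statement is only a conjecture, is that the positive $+\widetilde{E}$ term on the line bundle being pushed forward corresponds on the tight-closure side to the factor $I^{c(q-1)}$ rather than $I^{cq}$, and these two pieces of data must be matched by the Grothendieck trace of the Frobenius $F^e:\widetilde{A}\to\widetilde{A}$. In \cite{HY} the boundary is purely negative, so a single trace application suffices; here the trace must simultaneously account for the coefficient $c$ along $\pi^{-1}(X)$ and for the strict transform $\widetilde{E}$. In codimension one ($c=1$) this match is precisely the F-adjunction of \cite{Ta2}, but in codimension $c \geq 2$ no analogous F-adjunction along $\widetilde{E}$ is presently known, and I expect the argument to break down at exactly this step. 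A reasonable intermediate target, and apparently the route the author takes, is to combine Theorem \ref{I < adj} with the restriction formula Theorem \ref{restriction} so as to reduce the conjecture, when $X$ is normal Gorenstein, to the established multiplier/test ideal correspondence of \cite{HY} applied to $\J(X,V(\mathcal{D}_X)+Y|_X)$ on $X$.
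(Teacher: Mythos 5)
The statement you are asked about is labeled a \emph{conjecture} in the paper, and the paper does not prove it --- so there is no ``paper's own proof'' to compare against. What the paper offers is partial evidence: Theorem \ref{I < adj} gives the one-sided inclusion $\widetilde{\tau}_I(R,\underline{\a}^{\underline{t}}) \subseteq \adj_X(A,Y)$ (as you correctly observe, since when $\pi$ is a log resolution the $H^0$ in that theorem is exactly $\adj_X(A,Y)$), and Theorem \ref{correspond} records that the full equality holds in codimension one. Your diagnosis of the obstruction in the reverse direction --- that there is no known Grothendieck-trace or F-adjunction calculus along the higher-codimensional strict transform $\widetilde{E}$ that would match the $I^{c(q-1)}$ weight in the tight closure definition --- is a plausible account of why the equality is left open, and the local vanishing you propose as the geometric input is the natural analogue of the divisorial Proposition 1.5; the paper does not state or prove it in higher codimension.

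Your final paragraph, however, misreads the logical structure of the paper. Theorem \ref{restriction} is not an intermediate step toward the conjecture in the normal Gorenstein case, and combining it with \cite{HY} does not reduce the conjecture to known results. The implication runs in the opposite direction: the proof of Theorem \ref{restriction} \emph{uses} Theorem \ref{I < adj} (the easy half of the conjecture) together with Theorem \ref{correspond} (the divisorial case) as inputs, so one cannot then turn around and deduce the conjecture from Theorem \ref{restriction} without circularity. More fundamentally, Theorem \ref{restriction} asserts an equality of ideals inside $\sO_X$ (the quotient ring), namely $\J(X,V(\mathcal{D}_X)+Y|_X)=\adj_X(A,Y)\sO_X$, whereas Conjecture \ref{correspond conj} asserts an equality of ideals inside $R$ itself. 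Since $\adj_X(A,Y) \supseteq I$, knowing the image $\adj_X(A,Y)\sO_X$ does not determine $\adj_X(A,Y)$ as a subideal of $R$; any two ideals of $R$ containing $I$ with the same image modulo $I$ are indistinguishable at that level. Thus even under the normal Gorenstein hypothesis the restriction formula cannot, on its own, close the gap.
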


Conjecture \ref{correspond conj} is true if $X$ is a divisor on $A$. 
\begin{thm}[\textup{\cite[Theorem 5.3]{Ta2}}]\label{correspond}
Let $(R,\m)$ be a $\Q$-Gorenstein normal local ring essentially of finite type over a perfect field of prime characteristic $p$, and let $f$ be a nonzero element of $R$. 
Let $\underline{\a}^{\underline{t}}=\prod_{i=1}^m \a_i^{t_i}$ be a formal combination, where the $\a_i$ are ideals of $R$ such that $\a_i \cap R^{\circ, fR} \ne \emptyset$ and the $t_i$ are positive real numbers.
Set $X=\Spec R$, $D:=\Div_X(f)$ and $Y:=\sum_{i=1}^m t_i V(\a_i)$.  
Assume in addition that $(R,f, \underline{\a})$ is reduced from characteristic zero to characteristic $p \gg 0$, together with a log resolution $\pi:\widetilde{X} \to X$ of $(X,D+Y)$ used to define the adjoint ideal sheaf $\adj_D(X,Y)$ as in Definition \ref{codimension one}. Then
$$\adj_D(X,Y)=\widetilde{\tau}_{fR}(R,\underline{\a}^{\underline{t}}).$$
\end{thm}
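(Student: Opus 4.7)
My plan is to adapt the Hara–Yoshida proof of the multiplier ideal / test ideal correspondence \cite{HY} to the divisorial adjoint setting, establishing the two inclusions separately. Since both the adjoint ideal and the generalized test ideal commute with completion (the latter by Proposition \ref{completion}(2), the former by flat base change on the log resolution), one may assume that $R$ is complete and F-finite; set $d=\dim R$.

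For the inclusion $\widetilde{\tau}_{fR}(R,\underline{\a}^{\underline{t}}) \subseteq \adj_D(X,Y)$, the strategy mirrors that of Theorem \ref{I < adj} specialized to height $c=1$. Let $\pi:\widetilde{X}\to X$ be the fixed log resolution, let $Z=\pi^{-1}(\m)$ be the closed fiber, and form the composite edge map
$$\delta : H^d_{\m}(R) \to H^d_Z(\sO_{\widetilde{X}}) \to H^d_Z\bigl(\sO_{\widetilde{X}}(\pi^*D - \pi^{-1}_*D + \lfloor \pi^{-1}(Y) \rfloor)\bigr).$$
Local duality identifies $\Ann_R(\Ker\delta)$ with
$$\pi_*\sO_{\widetilde{X}}\bigl(\lceil K_{\widetilde{X}/X} - \pi^*D + \pi^{-1}_*D - \pi^{-1}(Y) \rceil\bigr),$$
which is exactly $\adj_D(X,Y)$ (the two expressions agree because $K_{\widetilde{X}/X}$, $\pi^*D$ and $\pi^{-1}_*D$ are integral). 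It therefore suffices to show $\Ker\delta \subseteq 0^{*(fR,\underline{\a}^{\underline{t}})}_{H^d_{\m}(R)}$. For any $\alpha \in f^{q-1}\a_1^{\lceil t_1 q\rceil}\cdots \a_m^{\lceil t_m q\rceil}$, the element $\alpha$ lies in $H^0\bigl(\widetilde{X},\sO_{\widetilde{X}}(-(q-1)\pi^*D - q\lfloor\pi^{-1}(Y)\rfloor)\bigr)$, yielding a commutative square of local cohomologies compatible with the $e$-th iterated Frobenius. Producing an $(fR,*)$-test element via Definition-Lemma \ref{test element} and taking Matlis duals then gives the containment.

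For the reverse inclusion $\adj_D(X,Y) \subseteq \widetilde{\tau}_{fR}(R,\underline{\a}^{\underline{t}})$, the key tool is the Grothendieck trace/Cartier map $F^e_*\omega_{\widetilde{X}} \to \omega_{\widetilde{X}}$ combined with a positive-characteristic vanishing theorem. I would take a section $z \in \adj_D(X,Y)$, view it as a global section of $\sO_{\widetilde{X}}(K_{\widetilde{X}/X}-\pi^*D+\pi^{-1}_*D-\lfloor\pi^{-1}(Y)\rfloor)$, raise it to the $q$-th power, twist by $f^{q-1}$ and by sections of the $\a_i^{\lceil t_i q\rceil}$, then apply the Cartier operator to land back inside $\pi_*\sO_{\widetilde{X}}(K_{\widetilde{X}/X})$. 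The vanishing of the higher direct images ensures that pushing forward along $\pi$ is exact at each step, so the resulting inclusion in $R$ exhibits $z$ as annihilating $0^{*(fR,\underline{\a}^{\underline{t}})}_{H^d_{\m}(R)}$, placing it in $\widetilde{\tau}_{fR}(R,\underline{\a}^{\underline{t}})$.

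The hard part will be the reverse inclusion, specifically arranging the Frobenius splitting on $\widetilde{X}$ compatibly with the strict transform $\pi^{-1}_*D$, which is the exceptional contribution that distinguishes the adjoint ideal from the ordinary multiplier ideal. The essential technical input is a suitable form of Hara's characteristic $p$ Kawamata–Viehweg vanishing for the $\Q$-divisor $K_{\widetilde{X}}-\pi^*D+\pi^{-1}_*D-\lfloor\pi^{-1}(Y)\rfloor$ on $\widetilde{X}$; verifying this vanishing is precisely what the hypothesis $p\gg 0$ after reduction from characteristic zero is designed to secure, and it is where the $\Q$-Gorenstein (as opposed to regular) hypothesis on $R$ forces one to enhance the argument of Theorem \ref{I < adj}.
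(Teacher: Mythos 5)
The paper does not actually prove this theorem; it quotes it directly from \cite{Ta2} (Theorem 5.3), so there is no internal proof against which your attempt can be directly compared. That said, your overall template — the Hara--Yoshida strategy of proving the ``easy'' inclusion by local duality plus an edge map on local cohomology, and the ``hard'' inclusion via the Cartier operator and a characteristic-$p$ Kawamata--Viehweg vanishing secured by reduction from characteristic zero — is indeed the correct one for such correspondence theorems, and it is the shape of the argument in \cite{Ta2}. However, your sketch has a real gap. You set up the edge map with source $H^d_{\m}(R)$ and then invoke local duality to identify $\Ann_R(\Ker\delta)$ with $\pi_*\sO_{\widetilde{X}}(\lceil K_{\widetilde{X}/X}-\pi^*D+\pi^{-1}_*D-\pi^{-1}(Y)\rceil)$. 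This identification is only valid when $R$ is Gorenstein: the test ideal is by definition the annihilator of a submodule of $E_R(R/\m)$, and local/Grothendieck duality pairs $H^d_Z(\mathcal G)$ against a pushforward involving $\omega_{\widetilde{X}}$; for the numerology to come out to the adjoint ideal you must take the source to be $E_R(R/\m)\cong H^d_{\m}(\omega_R)$ (valid for $R$ Cohen--Macaulay) and twist the target by $\pi^*\omega_R$. In Theorem~\ref{I < adj} the paper can use $H^d_{\m}(R)$ because there $R$ is regular, so $\omega_R\cong R$ and the distinction disappears; but in Theorem~\ref{correspond} the ring is only $\Q$-Gorenstein, so $\omega_R\not\cong R$ in general.

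Beyond the bookkeeping, your final paragraph acknowledges that the $\Q$-Gorenstein hypothesis ``forces one to enhance the argument,'' but it does not say how. The standard device in this setting is to either pass to the index-one canonical cover of $R$ (which is \'etale in codimension one and reduces to the Gorenstein case, at the cost of tracking the group action and the behavior of divisorial tight closure under finite covers) or to work systematically with powers $\omega_R^{(nr)}\cong R$ for $r$ the Gorenstein index. Without spelling out one of these mechanisms, the hard direction remains a genuine gap rather than a routine adaptation. You should also be explicit about where the reduction to characteristic $p\gg0$ enters: it is needed both so that the log resolution from characteristic zero survives and so that Hara's vanishing theorem (a Serre-type vanishing combined with Deligne--Illusie/Frobenius arguments) applies to produce the injectivity of the Frobenius action on the relevant local cohomology group; merely saying ``apply the Cartier operator'' does not by itself yield the needed splitting of the map into $0^{*(fR,\underline{\a}^{\underline{t}})}_{E_R(R/\m)}$.
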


\section{Restriction formula of adjoint ideals}
In this section, we formulate a restriction property of the adjoint ideal sheaf $\adj_X(A, Y)$ involving the l.c.i.~defect ideal sheaf $\mathcal{D}_X$ of $X$. 

Let $A$ be a nonsingular variety over an algebraically closed field $k$ of characteristic zero and $X$ be a normal Gorenstein closed subvariety of codimension $c$ of $A$. 
Kawakita \cite{Ka} then defined the \textit{l.c.i.~defect ideal sheaf} $\mathcal{D}_X$ of $X$ as follows. 
Since the construction is local, we may consider the germ at a closed point $x \in X$. 
We take generically a closed subscheme $Z$ of $A$ which contains $X$ and is locally a complete intersection of codimension $c$. 
By Bertini's theorem, $Z$ is the scheme-theoretic union of $X$ and another variety $C^Z$ of codimension $c$. 
Since $X$ is Gorenstein, the closed subscheme $D^Z:=C^Z|_X$ of $X$ is a Cartier divisor (see \cite[Lemma 1]{Vr}).  
Then the l.c.i.~defect ideal sheaf $\mathcal{D}_X$ of $X$ is defined by
$$\mathcal{D}_X:=\sum_{Z \subset A}\sO_X(-D^Z),$$
where $Z$ runs through all the general locally complete intersection closed subschemes of codimension $c$ which contain $X$. 
Note that the support of $\mathcal{D}_X$ coincides with the non-locally complete intersection locus of $X$. 
The reader is referred to \cite[Section 2]{Ka} and \cite[Section 9.2]{EM} for further properties of l.c.i.~defect ideal sheaves. 
 
\begin{thm}\label{restriction}
Let $A$ be a nonsingular variety over an algebraically closed field $k$ of characteristic zero and $Y=\sum_{i=1}^m t_i Y_i$ be a formal combination, where the $t_i$ are positive real numbers and the $Y_i$ are proper closed subschemes of $A$. If $X$ is a normal Gorenstein closed subvariety of codimension $c$ of $A$ which is not contained in the support of any $Y_i$, then 
$$\J(X,V(\mathcal{D}_X)+Y|_X)=\adj_X(A,Y) \sO_X,$$
where $\mathcal{D}_X$ is the l.c.i.~defect ideal sheaf of $X$.  
\end{thm}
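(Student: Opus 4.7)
The plan is to reduce the claimed equality to a comparison of generalized test ideals in characteristic $p>0$, and then close the loop via Peskine--Szpiro linkage. Working locally, I would replace $A$ by $\Spec R$ for $R=\sO_{A,x}$ at a closed point, let $I\subset R$ be the ideal of $X$ (unmixed of height $c$, with $R/I$ normal Gorenstein), and write $\a_i\subset R$ for the local ideals defining $Y_i$. Applying Bertini to the construction of $\mathcal{D}_X$, I may choose a single general complete intersection $J=(f_1,\dots,f_c)\subset I$ whose link $L:=(J:I)$ already realises the stalk of $\mathcal{D}_X$ at $x$, and then a sufficiently general element $g\in L$ for which $\Div_X(\overline g)$ is reduced and shares no component with $Y|_X$. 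By Remark \ref{adjoint rem}(3), the left-hand side is rewritten as
$$\J(X,V(\mathcal{D}_X)+Y|_X)=\adj_{\Div_X(\overline g)}(X,Y|_X).$$

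After reducing the whole datum $(R,I,L,g,\underline{\a})$ modulo a sufficiently large prime $p$, Theorem \ref{correspond} identifies the right-hand side of the previous display with the generalized test ideal $\widetilde{\tau}_{\overline g(R/I)}(R/I,\underline{\overline{\a}}^{\underline{t}})$, while Theorem \ref{I < adj} gives $\widetilde{\tau}_I(R,\underline{\a}^{\underline{t}})\subseteq\adj_X(A,Y)$. The theorem therefore reduces to two statements: \textup{(A)} the geometric inclusion
$$\adj_X(A,Y)\cdot(R/I)\subseteq\J(X,V(\mathcal{D}_X)+Y|_X),$$
and \textup{(B)} the algebraic equality
$$\widetilde{\tau}_I(R,\underline{\a}^{\underline{t}})\cdot(R/I)=\widetilde{\tau}_{\overline g(R/I)}(R/I,\underline{\overline{\a}}^{\underline{t}}).$$
Combining \textup{(A)}, \textup{(B)}, Theorem \ref{I < adj}, and the correspondence closes the cycle $\adj_X(A,Y)\cdot(R/I)\subseteq\J\subseteq\widetilde{\tau}_{\overline g(R/I)}=\widetilde{\tau}_I(R)\cdot(R/I)\subseteq\adj_X(A,Y)\cdot(R/I)$. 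For \textup{(A)}, I would work on a common log resolution dominating both $\mathrm{Bl}_XA$ and a log resolution of $(X,\Div(g)+Y|_X)$; adjunction along the components of $\widetilde E$ dominating $X$, combined with Kawamata--Viehweg vanishing in the spirit of the proposition preceding Example \ref{adjoint example}, expresses the restriction of $K_{\widetilde A/A}-c\,\pi^{-1}(X)-\lfloor\pi^{-1}(Y)\rfloor+\widetilde E$ in terms of the data defining $\J(X,V(\mathcal{D}_X)+Y|_X)$, with the non-l.c.i.\ contribution recorded by $\mathcal{D}_X$.

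Statement \textup{(B)} is the core of the argument. The Peskine--Szpiro identities for the regular sequence $J=(f_1,\dots,f_c)$, namely $J^{[q]}:J=(f_1\cdots f_c)^{q-1}R+J^{[q]}$ together with the linkage relations $I=J:L$ and $L=J:I$, give a controlled way to transfer Frobenius powers between $I^{c(q-1)}$ on $R$ and the distinguished element $g^{q-1}\in L^{q-1}$ on $R/I$ modulo $J^{[q]}$. The plan is to translate the defining test-ideal containment $\gamma\,\overline g^{\,q-1}\prod_i\overline{\a}_i^{\lceil t_iq\rceil}\overline z^{\,q}\subseteq(\mathrm{ideal})^{[q]}$ on $R/I$ into the companion containment $\gamma'\,I^{c(q-1)}\prod_i\a_i^{\lceil t_iq\rceil}z^q\subseteq(\mathrm{ideal})^{[q]}$ on $R$, and conversely. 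The exponent $c$ in $I^{c(q-1)}$ is accounted for precisely by the single generator $(f_1\cdots f_c)^{q-1}$ of $J^{[q]}:J$, and this is where the Gorenstein hypothesis on $X$ is consumed. The translation requires a test element $\gamma\in R^{\circ,I}$ that is simultaneously an $(I,*)$-test element for $E_R(R/\m)$ and whose image is a test element for the injective hull over $R/I$; such a $\gamma$ is produced via Definition--Lemma \ref{test element} after localising to the locus where $R/I$ and $R/L$ are strongly F-regular.

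The main obstacle I anticipate is step \textup{(B)}: the linkage identities hold only modulo $J^{[q]}$, while the test-ideal containment on $R$ must be verified in $R$ itself. Producing a single test element $\gamma$ that clears all the conductor-like obstructions uniformly for every $q=p^e$, and obtaining the two-sided symmetry needed for equality rather than just one inclusion, is the delicate part of the argument. A secondary issue is ensuring that reduction to a single complete intersection $J$ is harmless, i.e.\ that the resulting test ideal does not depend on the particular generic $J$ realising the stalk of $\mathcal{D}_X$; this should follow from the sum-over-$Z$ structure in the definition of $\mathcal{D}_X$ together with an openness argument in the choice of $J$.
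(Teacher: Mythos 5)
Your overall route — reduce to characteristic $p$, compare generalized test ideals, and use Peskine--Szpiro linkage to move the exponent $c(q-1)$ on $I$ to a power of the linked principal generator — matches the paper's strategy, and you correctly identify the linkage containment as the crux. However, there are three genuine problems in the details.

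First, the reduction of $\J(X,V(\mathcal{D}_X)+Y|_X)$ to a single adjoint ideal $\adj_{\Div_X(\overline g)}(X,Y|_X)$ is not a consequence of Bertini plus Remark \ref{adjoint rem}(3). Your statement that a single general complete intersection $J=(f_1,\dots,f_c)$ has link $L=(J:I)$ whose image ``already realises the stalk of $\mathcal D_X$'' is false as an equality of ideals: $\mathcal{D}_X=\sum_{Z}\sO_X(-D^Z)$ is typically a strictly larger ideal than the single summand $\sO_X(-D^Z)$ for any fixed general $Z$, so a general element of $L(R/I)$ need not be a general element of $\mathcal D_X$, and Remark \ref{adjoint rem}(3) cannot be invoked directly. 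What is actually true (and nontrivial) is that the multiplier/adjoint ideal computed from one general $Z$ agrees with $\J(X,V(\mathcal D_X)+Y|_X)$. This is exactly the content of the paper's Claim~1, and its proof is not a straight Bertini argument: one has to deform finitely many complete intersections $W_1,\dots,W_n$ realising $\mathcal D_X$ over an auxiliary affine space $T=\Spec k[t_1,\dots,t_n]$, compare log discrepancies on $\widetilde X\times T$ via the generating section $h$ of the deformed link, and then specialise to a general parameter. Your proposal leaves a gap precisely here.

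Second, for the inclusion you label (A), the paper does not argue via adjunction and Kawamata--Viehweg vanishing on a dominating resolution; it appeals to the Ein--Musta\c t\v a / Kawakita comparison of minimal log discrepancies (\cite[Remark 8.5]{EM}) which packages the non-l.c.i.\ contribution into $\mathcal D_X$ by construction. Because $X$ has arbitrary codimension and is only Gorenstein rather than l.c.i., there is no simple residue/adjunction formula along $\widetilde E$ that produces the $\mathcal D_X$ term; the vanishing-theorem argument you sketch is essentially what underlies the local vanishing for adjoint ideals along divisors (the proposition after Remark \ref{adjoint rem}), and it would need substantial new input to yield (A). You should either quote the mld-comparison result, or be prepared to reprove it.

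Third, you have set yourself an unnecessary task in (B): the cycle you write closes with only the one-sided inclusion $\widetilde{\tau}_{\overline g(R/I)}(R/I,\underline{\overline{\a}}^{\underline t})\subseteq\widetilde{\tau}_I(R,\underline{\a}^{\underline t})\,(R/I)$, because the opposite direction is supplied geometrically by (A) together with Theorems \ref{I < adj} and \ref{correspond}. The ``delicate two-sided symmetry'' you flag as the main obstacle is not needed, and indeed the paper proves only the one inclusion. For that inclusion the exact linkage statement used is $f^{q-1}(I^{[q]}:I)\subseteq I^{c(q-1)}+I^{[q]}$, proved by applying Vraciu's lemma to the commuting free resolutions of $S/(f_1,\dots,f_c)$, $S/I$ and their Frobenius powers to get $f^{q-1}(I^{[q]}:I)=(f_1\cdots f_c)^{q-1}$ in $S/I^{[q]}$; your identity $J^{[q]}:J=(f_1\cdots f_c)^{q-1}R+J^{[q]}$ is a helpful ingredient but is one step removed, and the transfer from $J^{[q]}:J$ to $I^{[q]}:I$ via $I=J:L$ and $L=J:I$ is exactly where the Gorenstein-linkage lemma is required and must be made explicit.
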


\begin{proof}
Since the question is local, we consider the germ at a closed point $x \in X \cap \bigcap_{i=1}^m Y_i \subset A$. Let $\I_X \subseteq \sO_A$ be the defining ideal sheaf of $X$ in $A$. 

Fix a regular function $\varphi \in \adj_X(A,Y) \setminus \I_X$. 
It then follows from Lemma \ref{adjoint basic} (2) that $\mathrm{mld}(X_{\rm sing} \cup \bigcup_{i=1}^m Y_i; A,cX+Y-\Div_A(\varphi))>0$. 
Applying \cite[Remark 8.5]{EM} (see also the proof of \cite[Theorem 1.1]{Ka}), we have 
$$\mathrm{mld}(X_{\rm sing} \cup \bigcup_{i=1}^m (X \cap Y_i); X,V(\mathcal{D}_X)+Y|_X-\Div_X(\overline{\varphi}))>0,$$
where $\overline{\varphi}$ is the image of $\varphi$ in $\sO_X$. 
This means that $(X,V(\mathcal{D}_X)+Y|_X-\Div_X(\overline{\varphi}))$ is klt, which is equivalent to saying that $\overline{\varphi}$ is in $\J(X,V(\mathcal{D}_X)+Y|_X)$. 
Thus, we conclude that 
$$\adj_X(A,Y) \sO_X \subseteq \J(X,V(\mathcal{D}_X)+Y|_X).$$

Next we will prove the converse inclusion. 
Take generically a closed subscheme $Z$ of $A$ which contains $X$ and is locally a complete intersection
of codimension $c$, so $Z$ is the scheme-theoretic union of $X$ and another variety $C^{Z}$, and $D^{Z}:=C^{Z}|_X$ is a Cartier divisor on $X$. 
\begin{cln}
By a general choice of $Z$,  one has 
$$\J(X,V(\mathcal{D}_X)+Y|_X)=\adj_{D^{Z}}(X,Y|_X).$$
\end{cln}
\begin{proof}[Proof of Claim 1 $($Kawakita$)$]
Since $\J(X,V(\mathcal{D}_X)+Y|_X) \supseteq \adj_{D^{Z}}(X,Y|_X)$ is clear from the definition of the ideal sheaf $\mathcal{D}_X$, we will prove the converse inclusion. 

Fix a regular function $\psi \in \J(X,V(\mathcal{D}_X)+Y|_X)$. 
By the definition of the ideal sheaf $\mathcal{D}_X$, there exist closed subschemes $W_1, \dots, W_n$ of $A$ which  contain $X$ and are locally complete intersections of codimension $c$ such that $\mathcal{D}_X=\sum_{j=1}^n \sO_X(-D^{W_j})$.  
Take a log resolution $\mu:\widetilde{X} \to X$ of $(X, D^{W_1}+\dots+D^{W_n}+Y|_X)$, and let $\{E_i\}_{i \in I}$ be a collection of all divisors on $\widetilde{X}$ which are supported on $\mathrm{Exc}(\mu) \cup \bigcup_{i=1}^m \mathrm {Supp}\;\mu^{-1}(Y_i|_X) \cup \bigcup_{j=1}^n \mu^{-1}_*D^{W_j}$. 
Since $\psi$ is in $\J(X,V(\mathcal{D}_X)+Y|_X)$, we have 
$$\mathrm{ord}_{E_i}(\psi)+\max_{1 \le j \le n} a(E_i; X, D^{W_j}+Y|_X) > 0$$ 
for all $i \in I$. 
Let $I_{W_j}=(f_1^{(j)}, \dots, f_c^{(j)}) \subseteq \sO_A$ be the defining ideal sheaf of $W_j$ in $A$, and set $I_{\mathcal{W}}:=(t_1 f^{(1)}_1+\cdots+t_n f^{(n)}_1, \dots, t_1 f^{(1)}_c+\cdots+t_n f^{(n)}_c) \subseteq \sO_A[t_1, \dots, t_n]$. 
Let $\mathcal{W} \subseteq A \times T$ be the corresponding closed subscheme, where $T:=\Spec k[t_1, \dots, t_n]$. 
Then $\mathcal{W}$ is the scheme-theoretic union of $X \times T$ and another variety $\mathcal{C}$. 
Note that $\mathcal{C}|_{X \times T}$ is an irreducible Cartier divisor over a generic point of $T$. 
One can choose a generator $h \in \sO_X \otimes k(t_1, \dots, t_n)$ of the principal ideal sheaf $\sO_{X \times T}(-\mathcal{C}|_{X \times T})$ over a generic point of $T$ so that the restriction of $h$ to the fiber over $(0,\ldots,0,\stackrel{j}{\check{1}},0,\ldots,0) \in T$ is a generator of $\sO_X(-D^{W_j})$.
Thinking  of the resolution $\mu \times {\rm id}_T: \widetilde{X} \times T \to X \times T$ induced by  $\mu$, we have 
$$a(\mathcal{E}_i; X \times T, \Div_{}(h)+Y|_X \times T) \ge a(E_i; X, D^{W_j}+Y|_X)$$
for all $i \in I$ and all $j=1, \dots, n$, where $\mathcal{E}_i:=E_i \times T \subset \widetilde{X} \times T$. 

On the other hand, the restriction of $h$ to the fiber over a general point $(t_1, \dots, t_n) \in T$ is a generator of $\sO_X(-D^{W_{t_1, \dots, t_n}})$ where $W_{t_1, \dots, t_n}:=\mathcal{W}|_{X \times (t_1, \dots, t_n)}$, and $\mu$ is a log resolution of $(X, W_{t_1, \dots, t_n}+Y|_X)$. Thus, for all $i \in I$, 
\begin{align*}
\mathrm{ord}_{E_i}(\psi)+a(E_i; X, D^{W_{t_1, \dots, t_n}}+Y|_X)&=\mathrm{ord}_{E_i}(\psi)+a(\mathcal{E}_i; X \times T, \Div_{}(h)+Y|_X \times T)\\
& \ge \mathrm{ord}_{E_i}(\psi)+\max_{1 \le j \le n} a(E_i; X, D^{W_j}+Y|_X)\\
& > 0.
\end{align*}
This implies that $\psi$ is in $\adj_{D^{W_{t_1, \dots, t_n}}}(X,Y|_X)$. 
\end{proof}
From now on, we may assume that $A=\Spec S$ and $X=\Spec R$, where $(S,\n)$ is a regular local ring essentially of finite type over a field of characteristic zero and and $R=S/I$ is a Gorenstein normal quotient of $S$. Let $\a_i$ be the ideal of $S$ corresponding to $Y_i$ for every $i=1, \dots, n$ and denote $\underline{\a}^{\underline{t}}=\prod \a_i^{t_i}$.  
Let $f_1, \dots, f_c$ be the regular sequence in $S$ corresponding to $Z$ and $f \in S$ be an element whose image $\overline{f}$ is a generator of the principal ideal $(((f_1, \dots, f_c):I)+I)/I$ of $R$.  
Thanks to Claim $1$, it is enough to prove that 
\begin{equation}
\adj_{\Div(\overline{f})}(X,Y|_X) \subseteq \adj_X(A,Y) \sO_X.
\end{equation}

Now we reduce the entire setup as above to characteristic $p \gg 0$ and switch the notation to denote things after reduction modulo $p$. 
In order to prove (1), by virtue of Theorems \ref{I < adj} and \ref{correspond}, it suffices to show that 
\begin{equation}
\widetilde{\tau}_{fR}(R, {\underline{\a}R}^{\underline{t}}) \subseteq \widetilde{\tau}_I(S,\underline{\a}^{\underline{t}})R.
\end{equation}
Since $S$ is F-finite, by Lemma \ref{completion}, forming generalized test ideals commutes with completion. 
Hence, we may assume that $S$ is complete. 
Let $E_S=E_S(S/\n)$ and $E_R=E_R(R/\n R)$ be the injective hulls of the residue fields of $S$ and $R$, respectively. 
We can view $E_R$ as a submodule of  $E_S$ via the isomorphism $E_R \cong (0:I)_{E_S} \subset E_S$. 
Then by Matlis duality,  (2) is equivalent to saying that
\begin{equation}
0_{E_R}^{*(fR, {\underline{\a}R}^{\underline{t}})} \supseteq 0_{E_S}^{*(I, {\underline{\a}}^{\underline{t}})} \cap E_R.
\end{equation}

Let $z \in  0_{E_S}^{*(I, {\underline{\a}}^{\underline{t}})} \cap E_R$.
Let $F^e_S:E_S \to \F^e_S(E_S) \cong E_S$ and $F^e_R:E_R \to \F^e_R(E_R) \cong E_R$ be the $e$-times iterated Frobenius maps induced on $E_S$ and $E_R$, respectively. 
Since $R=S/I$ is normal, one can choose an element $\gamma \in S^{\circ, I}$ such that the image $\overline{\gamma}$ of $\gamma$ is not contained in any minimal prime of $fR$ and the localized ring $R_{\overline{\gamma}}$ is regular. 
By Definition-Lemma \ref{test element}, some power $\gamma^N$ of $\gamma$ is an $(I,*)$-test element for $E_S$, and then $\gamma^N I^{c(q-1)}\a_1^{\lceil t_1q \rceil} \dots \a_m^{\lceil t_m q \rceil}F^e_S(z)=0$ for all $q=p^e$. 
On the other hand, $I^{[q]}F^e_S(z)=0$ for all $q=p^e$, because $z \in E_R \cong (0:I)_{E_S}$. 

\begin{cln} 
For all $q=p^e$, one has 
$$f^{q-1}(I^{[q]}:I) \subseteq  I^{c(q-1)}+I^{[q]}.$$
\end{cln}

\begin{proof}[Proof of Claim 2]
This claim is an easy consequence of the linkage theory of Peskine and Szpiro \cite{PS}. 
We consider a simultaneous minimal free resolution of a natural diagram between $S/(f_1, \dots, f_c), S/I, S(f_1^q, \dots, f_c^q)$ and  $S/I^{[q]}$. 
\[
\xymatrix{
& 0 \ar[rr] & & \save[]+<0.03cm,-0.3cm> S \ar@<-0.5ex>[ddl]|(.45)\hole \restore \ar[r] \ar@<0.3ex>[d]^{\times (f_1\cdots f_c)^{q-1}} & \dots \ar[r] & \save[]+<-1.3cm,-0.25cm> S/(f_1^q,\dots,f_c^q) \ar[ddl]|(.45)\hole \restore \ar[r] \ar@<-6.5ex>[d] & 0 \\
& 0 \ar[rr] & & \save[]+<0.03cm,-0.3cm> S \ar@<-0.5ex>[ddl]^(.2){\times f} \restore \ar[r] & \dots \ar[r] & \save[]+<-1.3cm,-0.25cm> S/(f_1,\dots,f_c) \ar[ddl] \restore \ar[r] & 0 \\
0 \ar[rr] & & S \ar[r]|\hole \ar@<-0.3ex>[d] & \dots \ar[r] & S/I^{[q]} \ar[r]|(.28)\hole \ar[d] & 0 & \\
0 \ar[rr] & & S \ar[r] & \dots \ar[r] & S/I \ar[r] & 0 & 
}
\]
Here note that the ideals $I, I^{[q]}, (f_1, \dots, f_c)$ and $(f_1^q, \dots, f_c^q)$ are all Gorenstein of height $c$.  
Looking at the last step of the above diagram, by \cite[Lemma 1]{Vr}, we obtain the equality $f^q(I^{[q]}:I)=f (f_1 \cdots f_c)^{q-1}$ in  $S/I^{[q]}$. Since $f$ is a regular element of $S/I$, by the flatness of the Frobenius map, $f$ is also a regular element of $S/I^{[q]}$. 
Therefore, $f^{q-1} (I^{[q]}:I)=(f_1 \cdots f_c)^{q-1}$ in $S/I^{[q]}$, which gives the assertion.  

\end{proof}

Thanks to Claim 2, we have $\gamma^N f^{q-1} (I^{[q]}:I) \a_1^{\lceil t_1q \rceil} \dots \a_m^{\lceil t_m q \rceil}F^e_S(z)=0$ for all $q=p^e$. 
This is equivalent to saying that $\overline{\gamma}^N \overline{f}^{q-1}(\a_1R)^{\lceil t_1q \rceil} \dots (\a_mR)^{\lceil t_m q \rceil}F^e_R(z)=0$ for all $q=p^e$, because $\F_R^e(E_R) \cong (0:I^{[q]})_{E_S}/(0:(I^{[q]}:I))_{E_S}$ (see \cite{Fe}, \cite{Gl} and the proof of \cite[Lemma 3.9]{Ta1}). 
Since $\overline{\gamma}$ is not in any minimal prime of $fR$, we conclude that $z \in 0_{E_R}^{*(fR, {\underline{\a}R}^{\underline{t}})}$. 
\end{proof}

\begin{rem}
(1)
In the proof of Theorem \ref{restriction}, we have used \cite[Remark 8.5]{EM} which was originally proved by using the theory of jet schemes. It, however, can be proved without using the theory of jet schemes (see the proof of \cite[Theorem 1.1]{Ka}). So, our proof does not rely on the theory of jet schemes. 

(2)
In the statement of \cite[Theorem 1.1]{EM}, the coefficients of $Y$ have to be nonnegative. Therefore, Theorem \ref{restriction} does not  follow from their result. 

(3)
The l.c.i.~defect ideal sheaf can be defined even when $X$ is only $\Q$-Gorenstein. 
Even in this case, Kawakita \cite{Ka} and Ein-Musta\c t\v a \cite{EM} formulated a comparison of minimal log discrepancies of $X$ and $A$. We, therefore, expect a generalization of Theorem \ref{restriction} to the $\Q$-Gorenstein case, but it is open.  
\end{rem}

\begin{eg}
(1) Let $X=\frac{1}{3}(1,1,1)$ be the quotient of $\C^3=\Spec \C[x_1, x_2, x_3]$ by the action of $\Z/ 3\Z$ given by $x_i \mapsto \xi x_i$, where $\xi$ is a primitive cubic root of unity. 
Denote by $\m_{X,0} \subset \sO_X$ (resp. $\m_{\C^3,0} \subset \sO_{\C^3}$) the maximal ideal sheaf of the origin.
By \cite[Example 2.3]{Ka}, the integral closure of the l.c.i.~defect ideal sheaf $\mathcal{D}_X$ of $X$ is $\m_{X,0}^5$. 
Therefore,
\begin{align*}
\J(X,V(\mathcal{D}_X))=\J(X, \m_{X,0}^5)&=\J(\C^3, \m_{\C^3,0}^{15}) \cap \sO_X\\
&=\m_{\C^3,0}^{13} \cap \sO_X\\
&=\m_{X,0}^5.
\end{align*} 
On the other hand, $X$ can be embedded into $A:=\C^{10}$. Then we have already seen in Example \ref{adjoint example} (2) that $\adj_X(A)=\m_{A,0}^5$, where $\m_{A,0} \subset \sO_A$ is the maximal ideal sheaf of the origin. Thus, $\J(X,V(\mathcal{D}_X))=\adj_X(A) \sO_X=\m_{X,0}^5$. 

(2) Let $X$ be the quotient of $(x^2+y^3+z^6=0) \subset \C^3$ by the action of $\Z/ 5\Z$ given by $x \mapsto \xi^3  x$, $y \mapsto \xi^2  y$ and $z \mapsto \xi  z$, where $\xi$ is a primitive quintic root of unity. Then $X$ can be embedded into $A:=\C^5$. 
Since $X$ is a Gorenstein closed subvariety of codimension three of $A$,  
by \cite[Example 2.4]{Ka} (which is an application of the structure theorem for Gorenstein ideals of codimension three in \cite{BE}), the l.c.i.~defect ideal sheaf $\mathcal{D}_X$ of $X$ is the maximal ideal sheaf $\m_{X,0}$ of the origin. 
Let $\pi:\widetilde{X} \to X$ be the blowing-up of $X$ at the origin with exceptional divisor $E$. Then $\pi$ is a log resolution of $X$ and $K_{\widetilde{X}/X}=-E$. 
Therefore, 
$$\J(X, V(\mathcal{D}_X))=\pi_*\sO_{\widetilde{X}}(K_{\widetilde{X}/X}-E)=\pi_*\sO_{\widetilde{X}}(-2E)=\m_{X,0}^2.$$
On the other hand, we have already seen in Example \ref{adjoint example} (3) that $\adj_X(A)=\m_{A,0}^2$, where $\m_{A,0} \subset \sO_A$ is the maximal ideal sheaf of the origin. 
Thus, $\J(X,V(\mathcal{D}_X))=\adj_X(A) \sO_X=\m_{X,0}^2$. 
\end{eg}

We conclude this section by stating a corollary of Theorem \ref{restriction}. 

Let $S$ be an algebra essentially of finite type over a field $k$ of characteristic zero, and let $I \subseteq S$ be an unmixed ideal of height $c$.  
Let $\underline{\a}^{\underline{t}}=\prod_{i=1}^m \a_i^{t_i}$ be a formal combination, where the $\a_i$ are ideals of $S$ such that $\a_i \cap S^{\circ, I} \ne \emptyset$ and the $t_i$ are positive real numbers.
We say that $(S, \underline{\a}^{\underline{t}})$ is \textit{of purely F-regular type} along $I$ if there exist a finitely generated $\Z$-subalgebra $A$ of $k$ and an algebra $S_A$ essentially of finite type over $A$ satisfying the following conditions:
\renewcommand{\labelenumi}{(\roman{enumi})}
\begin{enumerate}
\item $S_A$ is  flat over $A$. In addition,  $S_A \otimes_A k \cong S$, $I_A S=I$ and $\underline{\a}_A S=\underline{\a}$, where $I_A:=I \cap S_A \subseteq S_A$ and $\underline{\a}_A$ is the restriction of $\underline{\a}$ to $S_A$. 
\item $(S_{\kappa}, \underline{\a}_{\kappa}^{\underline{t}})$ is purely F-regular along $I_{\kappa}$ for every closed point $s$ in a dense open subset of $\Spec A$, where $\kappa=\kappa(s)$ denotes the residue field of $s \in \Spec A$, $S_{\kappa}=S_A \otimes_A \kappa(s)$, $I_{\kappa}:=I_A S_{\kappa}$ and $\underline{\a}_{\kappa}$ is the image of $\underline{\a}_A$ in $S_{\kappa}$. 
\end{enumerate}

\begin{cor}\label{purely F-regular vs plt}
In the above situation, suppose in addition that $k$ is an algebraically closed field, $S$ is a regular domain and $S/I$ is a Gorenstein quotient of $S$. 
Then $(\Spec S,\sum_{i=1}^m t_i V(\a_i))$ is plt along $V(I)$ if and only if $(S, \underline{\a}^{\underline{t}})$ is of purely F-regular type along $I$. 
\end{cor}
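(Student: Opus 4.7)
The plan is to deduce the corollary from Theorem \ref{restriction} combined with the two partial correspondences provided by Theorems \ref{I < adj} and \ref{correspond}. Both conditions in the statement are local on $\Spec S$, so I first localize at a closed point lying on $V(I)$ and assume that $(S,\n)$ is a regular local ring with $I \subseteq \n$.

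\textbf{Characteristic-zero reformulation.} I first rewrite ``plt along $X$'' as a multiplier-ideal condition on $X$. By Lemma \ref{adjoint basic}(2), $(A,Y)$ is plt along $X$ iff $\adj_X(A,Y) = \sO_A$; since $I \subseteq \n$, a one-line Nakayama argument shows that $\adj_X(A,Y)\sO_X = \sO_X$ (i.e.\ $\adj_X(A,Y) + I = S$) already upgrades to $\adj_X(A,Y)=\sO_A$. Thus plt along $X$ is equivalent to $\adj_X(A,Y)\sO_X = \sO_X$, which by Theorem \ref{restriction} is equivalent to $\J(X,V(\mathcal{D}_X)+Y|_X)=\sO_X$.

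\textbf{Characteristic-$p$ sandwich.} Following the proof of Theorem \ref{restriction}, I next choose generically a complete intersection $Z\supseteq X$ of codimension $c$ with $Z=X\cup C^{Z}$, pick $f\in S$ lifting a generator $\overline{f}$ of $((f_1,\dots,f_c):I+I)/I\subset R=S/I$, and reduce the whole setup to characteristic $p\gg 0$. Stacking Theorem \ref{I < adj} applied to $\widetilde{\tau}_I(S,\underline{\a}^{\underline{t}})$, Theorem \ref{correspond} together with Claim 1 of the proof of Theorem \ref{restriction} applied to $\widetilde{\tau}_{fR}(R,\underline{\a}R^{\underline{t}})$, and the key inclusion $\widetilde{\tau}_{fR}(R,\underline{\a}R^{\underline{t}})\subseteq\widetilde{\tau}_I(S,\underline{\a}^{\underline{t}})R$ established inside that same proof, I obtain after reduction
$$\adj_X(A,Y)\sO_X \;=\; \widetilde{\tau}_{fR}(R,\underline{\a}R^{\underline{t}}) \;\subseteq\; \widetilde{\tau}_I(S,\underline{\a}^{\underline{t}})R \;\subseteq\; \adj_X(A,Y)\sO_X,$$
forcing every inclusion to be an equality. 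In particular $\widetilde{\tau}_I(S,\underline{\a}^{\underline{t}})R=\adj_X(A,Y)\sO_X$ after reduction modulo $p\gg 0$.

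\textbf{Conclusion and main obstacle.} By Proposition \ref{completion}(3), $(S_\kappa,\underline{\a}_\kappa^{\underline{t}})$ is purely F-regular along $I_\kappa$ iff $\widetilde{\tau}_{I_\kappa}(S_\kappa,\underline{\a}_\kappa^{\underline{t}})=S_\kappa$; by the same Nakayama argument as in the first step, this is equivalent to its image in $R_\kappa$ being all of $R_\kappa$. By the sandwich above, this occurs for a dense open set of closed points of $\Spec A$ iff $\adj_X(A,Y)\sO_X=\sO_X$ in characteristic zero, which by the first step is the plt condition. The delicate point of the argument is the sandwich: it is what lets us upgrade the merely conjectural higher-codimension correspondence between $\widetilde{\tau}_I(S,\underline{\a}^{\underline{t}})$ and $\adj_X(A,Y)$ (only an inclusion is known, via Theorem \ref{I < adj}) into a genuine equality after pushing forward to $R$, by leveraging the divisorial correspondence of Theorem \ref{correspond} together with the restriction formula of Theorem \ref{restriction}. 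Once that equality is in hand, the corollary follows from the two Nakayama reductions to the quotient $R$.
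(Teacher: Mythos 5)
Your ``only if'' argument matches the paper's: reduce to the regular local case, use Theorem \ref{restriction} together with Claim 1 of its proof to identify $\adj_X(A,Y)\sO_X$ with the codimension-one adjoint ideal $\adj_{\Div(\overline{f})}(X,Y|_X)$, invoke Theorem \ref{correspond} to replace that by $\widetilde{\tau}_{fR}(R,\underline{\a}R^{\underline{t}})$ after reduction mod $p\gg0$, and feed this into the inclusion $\widetilde{\tau}_{fR}(R,\underline{\a}R^{\underline{t}})\subseteq\widetilde{\tau}_I(S,\underline{\a}^{\underline{t}})R$ proved inside Theorem \ref{restriction}. The ``sandwich'' you display is just this chain closed up with Theorem \ref{I < adj}, and the Nakayama step to pass from $\widetilde{\tau}_I(S,\underline{\a}^{\underline{t}})R=R$ to $\widetilde{\tau}_I(S,\underline{\a}^{\underline{t}})=S$ is fine since $I\subseteq\n$.

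There is, however, a circularity in your ``if'' direction. The whole sandwich, and in particular your ``characteristic-zero reformulation,'' rests on Theorem \ref{restriction}, whose hypotheses require $X=V(I)$ to be \emph{normal} (indeed $\mathcal{D}_X$ is only defined for normal $X$ in this paper, and the proof of Theorem \ref{restriction} uses normality of $R$ in several places, e.g.\ to pick $\gamma$ with $R_{\overline\gamma}$ regular). The corollary only assumes $R=S/I$ is Gorenstein; in the ``only if'' direction normality is a consequence of the plt hypothesis, which the paper notes explicitly, but in the ``if'' direction you start from the purely-F-regular-type hypothesis and do not yet know $R$ is normal --- you would know it only after concluding plt, which is the thing being proved. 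The paper sidesteps this entirely: for the ``if'' direction it uses \emph{only} Proposition \ref{completion}(3), giving $\widetilde{\tau}_{I_\kappa}(S_\kappa,\underline{\a}_\kappa^{\underline{t}})=S_\kappa$ for a dense open set of $\kappa$, together with Theorem \ref{I < adj}, which gives $\widetilde{\tau}_{I_\kappa}(S_\kappa,\underline{\a}_\kappa^{\underline{t}})\subseteq\adj_X(A,Y)_\kappa$; these two alone force $\adj_X(A,Y)=\sO_A$, with no reference to Theorem \ref{restriction}, to $\mathcal{D}_X$, or to normality. Once you replace your sandwich-based ``if'' step by this shorter route, the rest of your write-up is a correct rendering of the paper's argument.
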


\begin{proof}
Since the statement is local, we may assume that $S$ is a regular local ring. The ``if" part immediately follows from Proposition \ref{completion} (3) and Theorem \ref{I < adj}. We will prove the ``only if" part. 

Suppose that $(\Spec S,\sum_{i=1}^m t_i V(\a_i))$ is plt along $V(I)$. 
Then $R:=S/I$ is a normal local ring. 
Let $f_ 1, \dots, f_c$ be a general regular sequence in $S$ and $f \in S$ be an element whose image 
$\overline{f}$ is a generator of the principal ideal $(((f_1, \dots, f_c):I)+I)/I$ of $R$. 
Then, by Theorem \ref{restriction} and Claim $1$ in the proof of Theorem \ref{restriction}, the pair $(\Spec R, \sum_{i=1}^m t_i V(\a_i R))$ is plt along $\Div(\overline{f})$. 
Thanks to Theorem \ref{correspond}, this implies that $(R, {\underline{\a}R}^{\underline{t}})$ is of purely F-regular type along $fR$. 
Applying the argument used to prove the inclusion (2) in the proof of Theorem \ref{restriction}, we know that $(S, \underline{\a}^{\underline{t}})$ is of purely F-regular type along $I$.  
\end{proof}

\begin{small}
\begin{acknowledgement}
The author is indebted to Masayuki Kawakita for providing the proof of Claim 1 in Theorem \ref{restriction} and for answering several questions.
He is also grateful to Mircea Musta\c t\v a for sending him his preprint prior to publication and for helpful advice. 
He would like to thank Craig Huneke, Yasunari Nagai, Karl Schwede, Takafumi Shibuta and Kei-ichi Watanabe for valuable discussions and Natsuo Saito for drawing the diagram in the proof of Theorem \ref{restriction}. 
The author was partially supported by Grant-in-Aid for Young Scientists (B) 17740021 from JSPS and by Program for Improvement of Research Environment for Young Researchers from SCF commissioned  by MEXT of Japan. 
\end{acknowledgement}
\end{small}

\end{document}